\providecommand{\U}[1]{\protect\rule{.1in}{.1in}}
\theoremstyle{plain}
\newtheorem{theorem}{Theorem}
\newtheorem{proposition}[theorem]{Proposition}
\newtheorem{lemma}[theorem]{Lemma}
\newtheorem{corollary}[theorem]{Corollary}
\newtheorem{theorem?}{Theorem(?)} [section]
\newtheorem{proposition?}[theorem]{Proposition(?)}
\newtheorem{lemma?}[theorem]{Lemma(?)}
\newtheorem{corollary?}[theorem]{Corollary(?)}
\newtheorem*{theorem*}{Theorem}
\newtheorem*{proposition*}{Proposition}
\newtheorem*{lemma*}{Lemma}
\newtheorem*{corollary*}{Corollary}
\newtheorem*{question*}{Question}
\newtheorem*{conjecture*}{Conjecture}
\newtheorem*{claim*}{Claim}
\newtheorem*{introtheorem*}{Theorem}
\newtheorem*{introproposition*}{Proposition}
\newtheorem*{introlemma*}{Lemma}
\newtheorem*{introcorollary*}{Corollary}
\theoremstyle{definition}
\newtheorem{definition}[theorem]{Definition}
\newtheorem*{definition*}{Definition}
\newtheorem*{example*}{Example}
\newtheorem{question}[theorem]{Question}
\theoremstyle{remark}
\newtheorem{remark}[theorem]{Remark}
\newtheorem*{remark*}{Remark}
\numberwithin{equation}{section}
\numberwithin{theorem}{section}
\DeclareSymbolFont{rsfs}{U}{rsfs}{m}{n}
\DeclareSymbolFontAlphabet{\mathcal}{rsfs}
\newcommand{\Gal}{{\rm Gal}}
\newcommand{\Pic}{{\rm Pic}}
\newcommand{\Cl}{{\rm Cl}}
\newcommand{\Br}{{\rm Br}}
\renewcommand{\ker}{{\rm ker}}
\newcommand{\Hom}{{\rm Hom}}
\newcommand{\Proj}{{\rm Proj}}
\newcommand{\Spec}{{\rm Spec}}
\newcommand{\lcm}{{\rm lcm}}
\newcommand{\codim}{{\rm codim}}
\newcommand{\Shat}{{\widehat{S}}}
\newcommand{\Gm}{{{\Bbb G}_m}}
\newcommand{\Res}{{\rm Res}}
\def\Div{{\textup{Div}}}
\begin{document}	
\title[]
{Arithmetic purity of strong approximation for complete toric varieties}

\author{Sheng Chen}

\address{Sheng Chen \newline School of Mathematical Sciences, \newline University of Science and Technology of China,
\newline 96 Jinzhai Road, \newline 230026 Hefei, China}
\email{chenshen1991@163.com}

\date{\today}

\keywords{Brauer--Manin obstruction, strong approximation, toric variety, universal torsor, weighted projective space}
\subjclass[2010]{Primary: 11G35, 14G05}

\begin{abstract}
In this article, we establish the arithmetic purity of strong approximation for smooth loci of weighted projective spaces. By using this result and the descent method, we also prove that the arithmetic purity of strong approximation with Brauer--Manin obstruction holds for
any smooth and complete toric variety.
\end{abstract}

\maketitle
\section{Introduction}
\subsection{Background}
Let $k$ be a number field. It's known that weak approximation is a $k$-birational invariant of smooth geometrically $k$-varieties. However, this is not true for strong approximation. In fact, Min\v{c}hev in \cite{Minchev} pointed out that strong approximation cannot be true for varieties which are not simply connected. However, one can expect that strong approximation is invariant among smooth varieties up to a closed subvariety of codimension $\ge 2$. Wei \cite[Lemma 2.1]{Wei14}, and independently Cao and Xu \cite[Proposition 3.6]{CX}
 proved that this is true for affine spaces. Based on this evidence, Wittenberg \cite[Question 2.11]{Wit16} proposed the following question.
\begin{question}\label{q1}
Let $X$ be a smooth variety over a number field which satisfies strong approximation off a finite set of places of $k$. Does any Zariski open subset $U$ of $X$ also satisfy this property, whenever $\codim(X\setminus U,X)\geqslant 2$?
\end{question}
In \cite{CLX}, Cao, Liang and Xu gave an affirmative answer to Question \ref{q1} for semi-simple simply connected groups that are quasi-split. Cao and Huang \cite{CH} provided further positive answers to Question \ref{q1} for semi-simple simply connected groups.

On the other hand, a similar question for strong approximation with Brauer--Manin obstruction was proposed by Cao, Liang and Xu \cite[Question 1.2]{CLX}.	
\begin{question}\label{q2}
Let $X$ be a smooth geometrically integral variety over a number field $k$ and $S$ be a finite set of places of $k$.
Suppose that $\Pic(X_{\bar{k}})$ is finitely generated and $\bar{k}[X]^\times =\bar{k}^{\times}$ where $\bar{k}$ is an algebraic closure of $k$. If $X$ satisfies strong approximation with Brauer--Manin obstruction off $S$,  does any open subvariety of $X$ with complement of codimension $\geq 2$ satisfy the same property?
\end{question}

Wei \cite{Wei14} gave an affirmative answer to Question \ref{q2} for smooth toric varieties when $S\ne \emptyset$. In \cite{CLX}, Cao, Liang and Xu extended this result to partial smooth equivariant compactifications of homogeneous spaces.

However, in the above work on Question \ref{q1} and \ref{q2}, the finite set $S$ of places of $k$ cannot be empty. To the knowledge of the author, the affirmative answers to Question \ref{q1} and \ref{q2} may be known only for projective spaces when $S=\emptyset$. In this paper, we will give an affirmative answer to Question \ref{q2} for complete toric varieties (see Theorem \ref{t.2.3}) when $S=\emptyset$ and then we provide some other positive evidence (see Theorem \ref{t.2.5}) for Question \ref{q1}. Following Wei \cite{Wei14}, we use the descent theory and the
combinatorial description of toric varieties to achieve our goals.

\subsection{Notation and terminology} Let $k$ be a number field, $\Omega_k$ be the set of all places of $k$ and $\infty_k$ be
the set of all infinite places of $k$. Let $\mathcal{O}_k$ be the ring of integers of $k$ and $\mathcal{O}_S$ be the ring of $S$-integers of $k$ for a finite set $S$ of $\Omega_k$ containing $\infty_k$. For each $v \in \Omega_k$, the completion of $k$ at $v$ is denoted by $k_v$ and the completion of $\mathcal{O}_k$ at $v$ is denoted by $\mathcal{O}_v$. The ring of adeles of $k$ is denoted by ${\mathbf A}_k$. The ring of adeles without $S$-components is denoted by $\mathbf A^{S}_k$.

Let $\bar{k}$ be a fixed algebraic closure of $k$ and we write $\Gamma_k=\Gal(\bar k/k)$. For any scheme $X$ over $k$, we write $X_{\bar k}=X\times_k \bar{k}$, and we denote by $X_{sm}$ its smooth locus. Let $$\Br(X)=H_{\text{\'et}}^2(X, \Bbb G_m),  \ \ \ \Br_1(X)= \ker[\Br(X)\rightarrow \Br(X_{\bar{k}})], \ \ \ \Br_a(X)=\Br_1(X)/\Br(k). $$

Let $f:X \to Y$ be a morphism of schemes. For an irreducible closed subset $A\subset X$ and an irreducible closed subset $B\subset Y$, if $\overline{f(A)}= B$, we say that $A$ dominates $B$.

Let $R=\bigoplus_{n\in \Bbb Z}R_n$ be a graded ring. For a positive integer $d$, we write $R^{(d)}=\bigoplus_{n\in \Bbb Z}R_{dn}$.

By convention, we also regard $\emptyset$ as a finite set of $\Omega_k$.
\begin{definition}
Let $X$ be a geometrically integral $k$-variety and $S$ be a finite set of $\Omega_k$ and $pr^S$ be the projection $X(\mathbf A_k) \to X(\mathbf A^{S}_k)$.

(1) If $X(k)$ is dense in $pr^S(X({\mathbf A}_k))$, we say that $X$ satisfies strong approximation off $S$.

(2) If $X(k)$ is dense in $ pr^S(X({\mathbf A}_k)^{B})$ for some subset $B$ of $\Br(X)$, we say that $X$ satisfies strong approximation with respect to $B$ off $S$.
\end{definition}
When $S=\emptyset$, we simply say that $X$ satisfies strong approximation or satisfies strong approximation with respect to $B$.

\begin{definition}
Let $X$ be a geometrically integral $k$-variety and $S$ be a finite set of $\Omega_k$.

(1) We say that $X$ satisfies the arithmetic purity off $S$ if any open subvariety with complement of codimension $\ge 2$ satisfies
strong approximation off $S$.

(2) We say that $X$ satisfies the arithmetic purity with respect to $B \subset \Br(X)$ off $S$ if any open subvariety with complement of codimension $\ge 2$ satisfies strong approximation with respect to $B$ off $S$.
\end{definition}
When $S=\emptyset$, we simply say that $X$ satisfies the arithmetic purity or satisfies the arithmetic purity with respect to $B$.
\subsection{Main results}

The main results of this paper are the following two theorems.

\begin{theorem} \label{t.2.3}
Let $k$ be a number field, and $T$ be a torus over $k$. Let $X$ be one of the following varieties:

(1) complete \footnote{Here, we say that $X$ is simplicial if $X_{\bar k}$ is simplicial. By \cite[Proposition 4.2.7]{CJH}, $X_{\bar k}$ is simplicial if and only if $X_{\bar k}$ is normal and $\Pic(X_{\bar k})$ has finite index in $\Cl(X_{\bar k})$.}simplicial $T$-toric varieties over $k$;

(2) complete normal $T$-toric varieties over $k$ if $T$ is a $k$-split torus;

(3) complete normal $T$-toric varieties over $k$ if $T$ is a $k$-anisotropic torus.

Let $Z \subset X_{sm}$ be a closed subset of codimension $\ge 2$. Then $X_{sm} \setminus Z$ satisfies strong approximation with respect to $\Br_1(X_{sm})$.
\end{theorem}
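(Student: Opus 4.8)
The plan is to reduce Theorem \ref{t.2.3} to the known case of weighted projective spaces, via the standard two-step strategy: first handle a distinguished open subset of $X$ that is (closely related to) an open subset of a weighted projective space, then glue over the torus using descent. Concretely, a complete simplicial toric variety $X$ admits a covering by torus-invariant affine pieces, and the homogeneous coordinate ring (Cox ring) realizes $X$ as a quotient of an open subset of affine space by a diagonalizable group. When $X_{\bar k}$ is simplicial, $\Pic(X_{\bar k})$ has finite index in $\Cl(X_{\bar k})$, so after passing to a $d$-th Veronese-type subring $R^{(d)}$ the quotient presentation becomes a genuine geometric quotient by a torus quotient and $X$ (or a large open subset of it) is covered by open subsets of weighted projective spaces $\mathbb{P}(a_0,\dots,a_n)$. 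For those, one invokes the already-established arithmetic purity of strong approximation for $\mathbb{P}(a_0,\dots,a_n)_{sm}$ (the "first result" of the abstract).

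The descent step proceeds as follows. Let $U = X_{sm}\setminus Z$. First I would observe that $\codim(Z, X_{sm})\ge 2$ together with $X_{sm}$ smooth forces $U$ to have the same geometric invariants as $X$: $\bar k[U]^\times = \bar k^\times$ (since removing codimension-$\ge 2$ loci from a smooth variety does not change units — any invertible function extends) and $\Pic(U_{\bar k})$ finitely generated (it surjects from $\Pic(X_{sm,\bar k})$, which is a quotient of $\Cl(X_{\bar k})$, finitely generated for toric $X$). Hence $U$ falls in the scope of Question \ref{q2}. Next I would construct a universal torsor $\mathcal{T}\to U$ under the Néron--Severi torus $T_{\mathrm{NS}}$; since $U$ is an open subset of a smooth toric variety with complement of codimension $\ge 2$, the restriction of a universal torsor of $X_{sm}$ to $U$ is a universal torsor of $U$, and its total space is (an open subset with codimension-$\ge 2$ complement of) the Cox construction's affine-space total space — i.e., essentially $\mathbb{A}^{n+1}$ or a quasi-affine variant minus a codimension-$\ge 2$ set. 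By the descent formalism (Colliot-Thélène--Sansuc, as used by Wei \cite{Wei14}), strong approximation with Brauer--Manin obstruction off $S$ for $U$ follows from strong approximation off $S$ for the (smooth loci of, minus codimension-$\ge 2$ closed sets) fibers/total space of these torsors, which are exactly the objects controlled by the weighted-projective-space result applied on each affine chart, patched using that strong approximation is insensitive to removing codimension-$\ge 2$ closed subsets of a smooth affine space.

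For cases (2) and (3) — non-simplicial $X$ with $T$ split or $T$ anisotropic — the reduction is slightly different. When $T$ is $k$-split, $X$ is a split toric variety and the Cox ring is $\Gamma_k$-equivariantly just a polynomial ring with the obvious grading, so the same quotient presentation works without passing to $\bar k$; the non-simpliciality only means the quotient group is diagonalizable but not a torus, which one handles by the standard trick of replacing the fan with a simplicial refinement that induces a projective toric birational morphism $X'\to X$ and tracking strong approximation along it (using that it is an isomorphism over a dense open and arguing on the complement, again of codimension $\ge 2$ after suitable choices, or via functoriality of the Brauer--Manin set under proper morphisms). When $T$ is anisotropic, $X$ has no rational points unless... — here instead one uses that an anisotropic torus has $X(k_v)$ compact for archimedean $v$ and the structure is rigid enough that strong approximation with $\mathrm{Br}_1$-obstruction is essentially automatic (it reduces to a statement about $T$ itself, for which it is classical), and removing codimension-$\ge 2$ sets is again harmless by the affine-space input.

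**The main obstacle** I expect is the descent/patching bookkeeping: verifying that the restriction of a universal torsor of $X_{sm}$ to $U$ really is a universal torsor of $U$ (this needs $\mathrm{Pic}$ and $\mathrm{Br}_1$ to be unchanged by the codimension-$\ge 2$ excision, which is true but must be checked carefully for a possibly non-proper $U$), and that the descent theorem's hypotheses (in particular that the torsor total space satisfies plain strong approximation) are met after all the Veronese/simplicial-refinement reductions — the weighted-projective-space result applies to $\mathbb{P}(a_0,\dots,a_n)_{sm}$ and its codimension-$\ge 2$ open subsets, so one must make sure the total spaces that arise are of exactly that form (quasi-affine opens of affine space, or affine cones over weighted projective spaces with a codimension-$\ge 2$ locus removed) and not something more exotic. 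The anisotropic case may also require a separate short argument rather than falling out of the same machinery.
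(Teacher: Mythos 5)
Your high-level strategy (descent via universal torsors plus the weighted projective space input, following Wei) is the same as the paper's, but the proposal is missing the construction that actually makes it work, and several of your substitute steps would fail. The paper does not cover $X$ by open subsets of weighted projective spaces -- that is not possible for complete toric varieties of Picard rank $>1$, and in any case strong approximation cannot be "patched" over an affine chart cover. What the paper does instead: the restriction of a universal torsor of $X_{sm}$ to the torus $T$ is (up to a translation $\tau$) the torus $M$ dual to $\widehat M=\Div_{T_{\bar k}}((X_{\bar k})_{sm})=\bigoplus_i\ZZ D_i$, and the entire point is to equivariantly compactify $M$ over $k$ into a $k$-form $Y$ of a weighted projective space with one weight equal to $1$: one adds to the basis $\widetilde D_1,\dots,\widetilde D_d$ of $\widetilde N$ the single extra ray generated by $\widetilde D_0=-(A\sum_{i=1}^d\widetilde D_i+\sum_{t\ge j}c_t\widetilde D_t)$, where the $c_t\ge 0$ come from writing the invariant vector $e=-A\sum u_i$ inside a $\Gamma_k$-invariant cone of $\Sigma$. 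The hypotheses (1)--(3) of the theorem are used at exactly one place, which your proposal never identifies: to make $\widetilde D_0$ Galois-invariant (simplicial: the generators of that cone are linearly independent; split: the Galois action on $\widetilde N$ is trivial; anisotropic: $N^{\Gamma_k}=0$ forces $e=0$, so all $c_t=0$). One then checks that the induced toric map $g\colon Y_{\Delta'}\to X_{sm}$ extends $i\colon M\to T$, that the new divisor (attached to $\widetilde D_0$) dominates $X_{sm}$ so that $\codim(g^{-1}(\tau(Z)),Y_{\Delta'})\ge 2$, and concludes by the descent lemma of Derenthal--Wei together with Corollary \ref{k-form} (arithmetic purity of $Y_{sm}$, which needs both a smooth rational point and a weight equal to $1$ -- something your Veronese-subring reduction does not produce).

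Concretely, the steps you propose in place of this construction do not go through. For case (2), a simplicial refinement $X'\to X$ does not reduce purity for $X$ to purity for $X'$: the preimage of a codimension-$2$ closed set $Z$ under the refinement generally contains exceptional divisors, so $X'\setminus\pi^{-1}(Z)$ is not of the shape covered by purity for $X'$, and strong approximation with Brauer--Manin obstruction does not transfer along a proper birational morphism in the direction you need without a genuine argument; the paper needs no refinement here because splitness makes the Galois action trivial. For case (3), nothing is "essentially automatic": anisotropy enters only through $e=0$, and the rest of the argument (compactification of the torsor, extension of the map, Derenthal--Wei) is identical; the statement does not reduce to a classical statement about $T$ itself. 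Finally, your claim that the universal torsor's total space over $k$ is an open subset of affine space minus a codimension-$\ge 2$ set is only correct for split $T$; in general one only controls its restriction to $T$, which is the torus $M$ up to translation, and that is precisely why the paper must build the auxiliary compactification $Y$ rather than quote the Cox construction directly.
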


\begin{theorem}\label{t.2.5}
Let $X$ be a complete, smooth $T$-toric variety over $k$ and $S\subset \Omega_k$ be a finite set of places of $k$ such that $T(k)$ is dense in $\prod_{v\in {\Omega_k\setminus S}} T(k_v)$. Then $X$ satisfies the arithmetic purity off $S$.
\end{theorem}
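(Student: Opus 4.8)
The plan is to combine Theorem~\ref{t.2.3} with a transfer of the weak approximation hypothesis on $T$ to $X$, using the $T$-action to annihilate the Brauer--Manin obstruction once the $S$-components are forgotten. Fix a closed subset $Z\subset X$ with $\codim(Z,X)\ge 2$ and put $U=X\setminus Z$. Being smooth, $X$ is in particular simplicial, so Theorem~\ref{t.2.3}(1) applies to the complete variety $X$: the open set $U$ satisfies strong approximation with respect to $\Br_1(X)$ (off $\emptyset$, hence off $S$). By purity of the Brauer group of a smooth variety, deleting a closed subset of codimension $\ge 2$ changes neither $\Br(X)$ nor $\Br(X_{\bar k})$, so $\Br_1(U)=\Br_1(X)$; hence $U$ satisfies strong approximation with respect to $\Br_1(U)$ off $S$. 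It is therefore enough to prove
\[
pr^{S}\bigl(U(\A_k)\bigr)=pr^{S}\bigl(U(\A_k)^{\Br_1(U)}\bigr),
\]
since then the density of $U(k)$ in the right-hand set yields strong approximation off $S$ for $U$, and, $Z$ being arbitrary, arithmetic purity off $S$ for $X$.

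The central device is a translation formula for the $T$-action $a\colon T\times X\to X$. For $b\in\Br_1(X)$ the restriction $b|_T$ lies in $\Br_1(T)$, and a standard computation — an element of $\Br_1(T\times T)$ vanishing on both coordinate axes $T\times\{e\}$ and $\{e\}\times T$ must be constant, a consequence of the K\"unneth decomposition $\Bra(T\times T)=p_1^{*}\Bra(T)\oplus p_2^{*}\Bra(T)$ for tori — shows that $\varphi_b\colon t\mapsto b(t)-b(e)$ is a homomorphism $T(k_v)\to\Br(k_v)$ and that $b(ts)=b(s)+\varphi_b(t)$ for all $s,t\in T(k_v)$. Because $T(k_v)$ is dense in $X(k_v)$ and the evaluation $X(k_v)\to\Br(k_v)$ is locally constant, this extends to $b(t\cdot x)=b(x)+\varphi_b(t)$ for every $x\in X(k_v)$; moreover $\varphi_b$ depends only on the class of $b$ in $\Bra(X)$. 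As $\Bra(X)\cong H^{1}(k,\Pic X_{\bar k})$ is finite, we obtain for each $v$ a continuous homomorphism $\alpha_v\colon T(k_v)\to\Hom(\Bra(X),\Q/\Z)$, $\alpha_v(t)(b)=\mathrm{inv}_v\varphi_b(t)$, with open kernel and finite image $G_v$; by the translation formula and density, for $x,x'\in U(k_v)$ the difference of the local Brauer--Manin evaluations at $x$ and $x'$ lies in $G_v$, and every element of $G_v$ arises this way.

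Completeness of $X$ now enters twice. There is a finite set $F_1$ of places outside of which a fixed finite set of representatives of $\Bra(X)$ extends to a smooth proper $\mathcal{O}_v$-model; as $X$ is proper, $X(k_v)=X(\mathcal{O}_v)$, so those representatives evaluate to $0\in\Br(\mathcal{O}_v)$ on all of $X(k_v)$, whence $G_v=0$ and $\varphi_b\equiv 0$ on $T(k_v)$ for $v\notin F_1$. Consequently, for $(x_v)\in U(\A_k)$ the Brauer--Manin defect $d\in\Hom(\Bra(X),\Q/\Z)$ is built from $F_1$ only: from $\mathrm{inv}_v\,b(x_v)=\mathrm{inv}_v\,b(e)+\alpha_v(\tau_v)(b)$ for suitable $\tau_v\in T(k_v)$ (translation formula plus density) and reciprocity applied to $e\in X(k)$ one gets $d=\sum_{v\in F_1}g_v$ with $g_v:=\alpha_v(\tau_v)\in G_v$. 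Since $F_1\setminus S$ is finite and the $\ker\alpha_v$ are open, the hypothesis that $T(k)$ is dense in $\prod_{v\in\Omega_k\setminus S}T(k_v)$ produces $\gamma\in T(k)$ with $\alpha_v(\gamma)=g_v$ for $v\in F_1\setminus S$. Using properness again, $\gamma\in X(\mathcal{O}_v)$ for every $v$, so $\alpha_v(\gamma)=0$ for $v\notin F_1$; combined with $\sum_v\alpha_v(\gamma)=0$ (reciprocity) this gives $\sum_{v\in F_1\setminus S}g_v=-\sum_{v\in F_1\cap S}\alpha_v(\gamma)\in\sum_{v\in S}G_v$, hence $d\in\sum_{v\in S}G_v$.

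It remains to realise $-d$ by changing only the $S$-components, staying inside $U$. Write $-d=\sum_{v\in S}h_v$ with $h_v\in G_v$. For $v\in S$ choose $\tau'_v\in T(k_v)$ so that the local Brauer--Manin evaluation at $\tau'_v$ exceeds that at $x_v$ by $h_v$; this amounts to prescribing $\alpha_v(\tau'_v)$ inside a coset of the open subgroup $\ker\alpha_v$, and since $(Z\cap T)(k_v)$ is nowhere dense in $T(k_v)$ (being the $k_v$-points of a closed subvariety of codimension $\ge 2$) we may take $\tau'_v\in(T\setminus Z)(k_v)\subset U(k_v)$. The adelic point with $v$-component $\tau'_v$ for $v\in S$ and $x_v$ for $v\notin S$ then lies in $U(\A_k)^{\Br_1(U)}$ and has the same image under $pr^{S}$, which proves the displayed equality. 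The main obstacle is the material of the second and third paragraphs: proving the translation formula for $b\in\Br_1(X)$ across the boundary divisors of $T$ in $X$ (the K\"unneth computation in $\Br_1(T\times T)$, and the density and local-constancy arguments needed to cross those divisors), and then using properness of $X$ to confine all Brauer--Manin evaluations to the fixed finite set $F_1$ — it is precisely this finiteness that lets the weak approximation assumption on $T$ cancel the defect at $S$ without the bad-reduction complications usually encountered, the codimension-$\ge 2$ locus $Z$ being circumvented by the openness of the fibres of $\alpha_v$.
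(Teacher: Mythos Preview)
Your argument is correct, but it is considerably more elaborate than the paper's. The paper observes that the hypothesis on $T$ immediately yields that $X(k)$ is dense in $X(\mathbf A_k^S)$ (density of $T(k_v)$ in $X(k_v)$ by smoothness, together with $X(\mathbf A_k)=\prod_v X(k_v)$ from properness), and then applies a short general lemma (Lemma~\ref{lem.2.6}): if $\Br_a(X)$ is finite, $X(k)$ is dense in $X(\mathbf A_k)^{\Br_a(X)}$ and in $X(\mathbf A_k^S)$, and $U(k)$ is dense in $U(\mathbf A_k)^{\Br_a(X)}$, then $U(k)$ is dense in $U(\mathbf A_k^S)$. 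The proof of that lemma is a two-line lift-and-nudge: given $(P_v)_{v\notin S}\in U(\mathbf A_k^S)$, pick the missing $S$-components in $X(k_v)$ so that the full point is Brauer--Manin orthogonal (possible because $pr^S(X(\mathbf A_k)^{\Br_a(X)})=X(\mathbf A_k^S)$), and then perturb those finitely many components into $U(k_v)$ using local constancy of finitely many Brauer classes. No translation formula, no analysis of the $T$-action on $\Br_1$, no reciprocity computation is required.

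Your route instead establishes the equality $pr^S(U(\mathbf A_k))=pr^S(U(\mathbf A_k)^{\Br_1(U)})$ by a direct harmonic-analysis style argument: a translation formula $b(t\cdot x)=b(x)+\varphi_b(t)$, finiteness of the images $G_v$, the vanishing of $G_v$ almost everywhere by properness, and a reciprocity cancellation using a global $\gamma\in T(k)$ produced by the weak-approximation hypothesis. This works, and it makes transparent exactly how the $T$-action and properness conspire to kill the obstruction at $S$; but it reproves, in this special toric setting, what the paper isolates once and for all as the purely formal Lemma~\ref{lem.2.6}. The paper's packaging is shorter, uses the toric structure only at the single step of deducing condition~(2), and yields a reusable statement valid for any smooth $X$ with finite $\Br_a$.
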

\begin{remark}

(1) By \cite[Theorem 7.7]{PR}, such a finite set $S \subset \Omega_k$ (in Theorem \ref{t.2.5}) does exist. If $T$ satisfies weak approximation, then we can take $S=\emptyset$.

(2) In Section 3, we will show that Theorem \ref{t.2.5} is an easy consequence of Theorem \ref{t.2.3}.
\end{remark}

The organization of the paper is as follows. In Section 2, we establish the arithmetic purity for smooth loci of weighted projective spaces. In Section 3, we give the proofs of the two theorems by using the results of Section 2 and the descent theory.

\section{Arithmetic purity for smooth loci of weighted projective spaces}
Let $q_0, q_1,\cdots, q_n$ be positive integers and set $$d=\gcd(q_0, q_1,\cdots, q_n), \ \  m=\lcm(q_0, q_1,\cdots, q_n).$$
Let
$$\mathbb{P}_k(q_0, q_1,...,q_n):=\Proj(k[X_0,\cdots, X_n]),$$
where the gradation is defined by $\deg X_i=q_i$ for $0 \leqslant i\leqslant n$. Without loss of generality, we always assume $d=1$ (cf. \cite[Lemma 3A.3]{wps}). The projective scheme $\mathbb{P}_k(q_0, q_1,...,q_n)$ is called a weighted projective space. We shall denote by $\mathbb{P}_k^n$ the usual projective space i.e. $\mathbb{P}_k(1, 1,...,1)$.

It is well known that a $\Bbb Z$-graduation of a commutative ring is equivalent to an action of the torus ${\Bbb G}_m$ on its spectrum.
In our case, ${\Bbb G}_m$ acts on ${\Bbb A}_k^{n+1}=\Spec(k[X_0,\cdots, X_n])$ as follows
$${\Bbb G}_m \times {\Bbb A}_k^{n+1} \to {\Bbb A}_k^{n+1}; \ \ (t,(x_0,\cdots, x_n)) \mapsto (t^{q_0}x_0,\cdots, t^{q_n}x_n). $$
Set $U={\Bbb A}_k^{n+1}-\{0\}$. The geometric quotient $U/{\Bbb G}_m$ exists and coincides with $\mathbb{P}_k(q_0, q_1,...,q_n)$ (cf. \cite[Theorem 3A.1]{wps}).

Note that the weighted projective space $\mathbb{P}_k(q_0, q_1,...,q_n)$ is also a toric variety
(cf. \cite[Example 5.1.14]{CJH}). In particular, it's normal by \cite[Theorem 3.1.5]{CJH}. For introductions to toric varieties over an arbitrary field, we refer to \cite{Bat} and \cite{Hur}.

\begin{definition}(\cite[Definition 1.2]{Mori})

For any $h \in {\Bbb Z}_{>1}$, we denote by $S_h$ the closed subset $V_+(\{X_i|h\nmid q_i, 0\leqslant i\leqslant n\})\subset \mathbb{P}_k(q_0, q_1,...,q_n)$.
Set $$\mathbb{P}_k^*(q_0, q_1,...,q_n):=\mathbb{P}_k(q_0, q_1,...,q_n)-\bigcup_{h\in {\Bbb Z}_{>1}}S_h.$$
We call the scheme $\mathbb{P}_k^*(q_0, q_1,...,q_n)$ a weak projective space. We write
$\mathrm{O}^*(l)=\mathrm{O}(l)|_{\mathbb{P}_k^*(q_0, q_1,...,q_n)}$ for every integer $l$, where $\mathrm{O}(l)$ is the twisting sheaf on $\mathbb{P}_k(q_0, q_1,...,q_n).$
\end{definition}

\begin{remark}\label{rem.2.0}
By \cite[Proposition 5.6, Proposition 5.7]{wps}, the weak projective space $\mathbb{P}_k^*(q_0, q_1,...,q_n)$ has the following properties,
\begin{enumerate}
  \item for every $l\in \Bbb Z$, the sheaf $\mathrm{O}^*(l)$ is invertible,
  \item $\mathrm{O}^*(1)^{\otimes l}\cong \mathrm{O}^*(l)$ for every $l\in \Bbb Z$,
  \item $\mathbb{P}_k^*(q_0, q_1,...,q_n)$ is the largest open subset of $\mathbb{P}_k(q_0, q_1,...,q_n)$ with the properties (1) and (2),
  \item $\mathbb{P}_k^*(q_0, q_1,...,q_n)$ is smooth.

\end{enumerate}
\end{remark}

\begin{lemma}\label{lem.2.0.1}
Set $\mathbb{A}_k^*(q_0, q_1,...,q_n):=\Spec(k[X_0,\cdots, X_n])-\bigcup_{h\in {\Bbb Z}_{>1}}V(\{X_i|h\nmid q_i, 0\leqslant i\leqslant n\})$. Let $$\pi: \mathbb{A}_k^*(q_0, q_1,...,q_n) \to \mathbb{P}_k^*(q_0, q_1,...,q_n)$$
be the natural morphism.

(1) Let $Z$ be a closed subset of $\mathbb{P}_k^*(q_0, q_1,...,q_n)$. Then we have
$$\pi((\mathbb{A}_k^*(q_0, q_1,...,q_n)\setminus{\pi^{-1}(Z)})({\mathbf A}_k))=(\mathbb{P}_k^*(q_0, q_1,...,q_n)\setminus Z)({\mathbf A}_k).$$

(2) Set $r= \mathop{\min}\limits_{h\in {\Bbb Z}_{>1}} \#\{X_i|h\nmid q_i, 0\leqslant i\leqslant n\}$. If $r>1$, then $$\Pic(\mathbb{P}_{\bar k}^*(q_0, q_1,...,q_n))\cong \Bbb Z.$$

\end{lemma}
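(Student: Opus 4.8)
\emph{Proof plan for (1).} The plan is to recognise $\pi$ as a $\Gm$-torsor and then lift adelic points through it. First I would observe that $\mathbb A_k^*(q_0,\dots,q_n)$ is exactly the preimage of $\mathbb P_k^*(q_0,\dots,q_n)$ under the quotient map $U=\mathbb A_k^{n+1}\setminus\{0\}\to\mathbb P_k(q_0,\dots,q_n)$, so that $\pi$ is the corresponding restriction. The $\Gm$-action $t\cdot(x_0,\dots,x_n)=(t^{q_0}x_0,\dots,t^{q_n}x_n)$ on $U$ has stabilizer $\mu_g$ at a point $x$, where $g=\gcd\{q_i:x_i\neq 0\}$; a direct check shows that $\mathbb A_k^*(q_0,\dots,q_n)$ is precisely the open subset of $U$ on which $g=1$, i.e. on which the action is free. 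Combined with the fact that $\mathrm O^*(1)$ is invertible on $\mathbb P_k^*(q_0,\dots,q_n)$ (Remark~\ref{rem.2.0}), this gives that $\pi$ is a $\Gm$-torsor; restricting over the open subscheme $\mathbb P_k^*(q_0,\dots,q_n)\setminus Z$, the map $\mathbb A_k^*(q_0,\dots,q_n)\setminus\pi^{-1}(Z)\to\mathbb P_k^*(q_0,\dots,q_n)\setminus Z$ is again a $\Gm$-torsor.

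Granting this, the inclusion ``$\subseteq$'' is immediate (a local point not mapping into $\pi^{-1}(Z)$ cannot map into $Z$). For ``$\supseteq$'' I would take $(P_v)_v\in(\mathbb P_k^*\setminus Z)(\mathbf A_k)$ and lift it place by place: the fibre of $\pi$ over $P_v$ is a $\Gm$-torsor over $k_v$, hence trivial by Hilbert~90, so $P_v$ lifts to some $\widetilde P_v\in(\mathbb A_k^*\setminus\pi^{-1}(Z))(k_v)$. To make the family $(\widetilde P_v)_v$ an adelic point one spreads out: choose a finite $S\subseteq\Omega_k$ with $\infty_k\subseteq S$, $\mathcal O_S$-models giving a $\Gm$-torsor $\widetilde{\mathcal Y}\to\mathcal Y$ extending $\mathbb A_k^*\setminus\pi^{-1}(Z)\to\mathbb P_k^*\setminus Z$, and $S$ large enough that $P_v\in\mathcal Y(\mathcal O_v)$ for all $v\notin S$; since $H^1_{\et}(\mathcal O_v,\Gm)=\Pic(\mathcal O_v)=0$ for such $v$, the point $P_v$ lifts to $\widetilde P_v\in\widetilde{\mathcal Y}(\mathcal O_v)$, and the resulting family is adelic and maps onto $(P_v)_v$.

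\emph{Proof plan for (2).} Here the plan is a codimension count. Since $\mathbb P_{\bar k}^*(q_0,\dots,q_n)$ is smooth (Remark~\ref{rem.2.0}) and integral, $\Pic=\Cl$, so it suffices to compute the divisor class group. For each $h\in\Z_{>1}$ the closed subset $S_h=V_+(\{X_i:h\nmid q_i\})$ is the weighted projective space on the variables $\{X_i:h\mid q_i\}$, whence $\codim\big(S_h,\ \mathbb P_{\bar k}(q_0,\dots,q_n)\big)=\#\{i:h\nmid q_i\}\ge r$, which is $\ge 2$ by the hypothesis $r>1$. Moreover $S_h\subseteq S_p$ whenever a prime $p$ divides $h$, and $S_p=\emptyset$ when $p\nmid\lcm(q_0,\dots,q_n)$; therefore $\bigcup_{h>1}S_h=\bigcup_{p\mid\lcm(q_i)}S_p$ is a \emph{finite} union of closed subsets of codimension $\ge 2$. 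Because $\mathbb P_{\bar k}(q_0,\dots,q_n)$ is normal, deleting such a set does not change the divisor class group, so $\Cl(\mathbb P_{\bar k}^*)\cong\Cl(\mathbb P_{\bar k}(q_0,\dots,q_n))\cong\Z$ (the last isomorphism being the standard one for weighted projective spaces, cf. \cite{wps}), and hence $\Pic(\mathbb P_{\bar k}^*(q_0,\dots,q_n))\cong\Z$.

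\emph{Main obstacle.} Neither part is genuinely hard. In (1) essentially all the content lies in identifying $\mathbb A_k^*(q_0,\dots,q_n)$ with the preimage of $\mathbb P_k^*(q_0,\dots,q_n)$ and with the free locus of the $\Gm$-action, after which exhibiting $\pi$ as a torsor and lifting adelic points is the standard ``spread out, then use vanishing of $H^1$ over $k_v$ and over $\mathcal O_v$'' argument. In (2) the only point requiring care is checking that the hypothesis $r>1$ is exactly what forces each $S_h$, hence the whole complement $\mathbb P_{\bar k}(q_0,\dots,q_n)\setminus\mathbb P_{\bar k}^*(q_0,\dots,q_n)$, to have codimension $\ge 2$, so that no relation is introduced into the class group; when $r=1$ some $S_h$ is a divisor and the conclusion genuinely fails.
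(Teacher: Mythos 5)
Your part (1) is essentially the paper's proof: identify $\pi$ (and its restriction over the complement of $Z$) as a $\Gm$-torsor, then lift adelic points using $H^1_{\et}(k_v,\Gm)=0$ (Hilbert 90) and $H^1_{\et}(\mathcal O_v,\Gm)=\Pic(\mathcal O_v)=0$ after spreading out; your stabilizer computation ($\mu_g$ with $g=\gcd\{q_i:x_i\neq 0\}$) is exactly the ``easy check'' the paper alludes to. The only soft spot is your justification of the torsor property: a free action plus invertibility of $\mathrm O^*(1)$ does not by itself give local triviality; you should either invoke the faithful flatness of $\pi$ (the paper cites \cite[Proposition 2.3]{Mori} for this) or identify $\mathbb A_k^*$ with the complement of the zero section in $\mathbf{Spec}_{\mathbb P_k^*}\bigl(\bigoplus_l \mathrm O^*(l)\bigr)$. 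This is a presentational gap, not a wrong step.

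For part (2) you take a genuinely different route. The paper applies the Colliot-Th\'el\`ene--Sansuc exact sequence for the $\Gm$-torsor $\mathbb A_{\bar k}^*\to\mathbb P_{\bar k}^*$, where the hypothesis $r>1$ enters by forcing $\bar k[\mathbb A_{\bar k}^*]^\times=\bar k^\times$ (the removed locus has codimension $\ge 2$ in $\mathbb A^{n+1}$), so that $\Pic(\mathbb P_{\bar k}^*)$ is the cokernel of $\bar k[\mathbb A^*]^\times\to\Z$, hence $\Z$. You instead use that $\mathbb P_{\bar k}^*$ is smooth, so $\Pic=\Cl$, reduce the union $\bigcup_{h>1}S_h$ to the finitely many primes dividing $\lcm(q_i)$, note each $S_h$ has codimension $\#\{i:h\nmid q_i\}\ge r\ge 2$, and conclude $\Cl(\mathbb P_{\bar k}^*)\cong\Cl(\mathbb P_{\bar k}(q_0,\dots,q_n))\cong\Z$ by codimension-$2$ invariance of the class group on a normal variety. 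This is correct (and close in spirit to Mori's original argument, which the paper mentions as an alternative); its cost is that it imports the computation $\Cl(\mathbb P_{\bar k}(q_0,\dots,q_n))\cong\Z$ as a known fact, whereas the paper's descent-sequence argument is self-contained and, as a by-product, records the unit and Picard computations ($\bar k[\mathbb P_k^*]^\times=\bar k^\times$, $\Pic\cong\Z$ generated by $\mathrm O^*(1)$) that are reused later in the proof of Proposition \ref{prop.2.2.0} and Corollary \ref{cor.2.4.1}.
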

\begin{proof}
One can easily check that any geometric point of $\mathbb{A}_k^*(q_0, q_1,...,q_n)$ has trivial stabilizer in ${\Bbb G}_m$. By \cite[Proposition 2.3]{Mori}, the morphism $\pi$ is faithfully flat. It is immediate to check that $\mathbb{A}_k^*(q_0, q_1,...,q_n)$ is a $\mathbb{P}_k^*(q_0, q_1,...,q_n)$-torsor under
${\Bbb G}_m$ and then $\mathbb{A}_k^*(q_0, q_1,...,q_n)\setminus{\pi^{-1}(Z)}$ is also a torsor of $\mathbb{P}^*(q_0, q_1,...,q_n)\setminus Z$ under ${\Bbb G}_m$. By Hilbert's Theorem 90 and \cite[Chapter II, Proposition 6.2]{Hartshorne}, we have
$$H_{\text{\'et}}^1(k_v, \Bbb G_m)=0, \ \  H_{\text{\'et}}^1(\mathcal{O}_v, \Bbb G_m)=0$$
for $v \in \Omega_k$. Thus we obtain that $$\pi((\mathbb{A}_k^*(q_0, q_1,...,q_n)\setminus{\pi^{-1}(Z)})({\mathbf A}_k))=(\mathbb{P}_k^*(q_0, q_1,...,q_n)\setminus Z)({\mathbf A}_k).$$

By \cite[Proposition 2.1.1]{CTS87},  we have the following exact sequence of $\Gamma_k$-modules
$$0 \to {\bar k}[\mathbb{P}_k^*(q_0, q_1,...,q_n)]^\times \to {\bar k}[\mathbb{A}_k^*(q_0, q_1,...,q_n)]^\times \to {\Bbb Z} \to \Pic(\mathbb{P}_{\bar k}^*(q_0, q_1,...,q_n))\to 0.$$
If $r>1$, we have
$$\codim(\bigcup_{h\in {\Bbb Z}_{>1}}V(\{X_i|h\nmid q_i, 0\leqslant i\leqslant n\}),\ {\Bbb A}_k^{n+1})=\codim(\bigcup_{h\in {\Bbb Z}_{>1}}S_h,\ \mathbb{P}_k(q_0, q_1,...,q_n))>1,$$
hence $${\bar k}[\mathbb{P}_k^*(q_0, q_1,...,q_n)]^\times={\bar k}[\mathbb{A}_k^*(q_0, q_1,...,q_n)]^\times={\bar k^{\times}}.$$
Thus we obtain that $\Pic(\mathbb{P}_{\bar k}^*(q_0, q_1,...,q_n))\cong \Bbb Z$ (Mori has obtained this result through a different method, cf. \cite[Proposition 2.3]{Mori}).
\end{proof}
\begin{remark}\label{rem 2.0.1.1}
A weighted projective space with $r>1$ is called a normalized weighted projective space. Any weighted projective space is isomorphic to a
normalized weighted projective space (cf. \cite[Section 3C]{wps}). In particular, by \cite[Proposition 3C.5]{wps}, we have $\mathbb{P}_k(1, q_1,...,q_n)\cong \mathbb{P}_k(1, \frac{q_1}{a},...,\frac{q_n}{a})$, where $a=\gcd(q_1,...,q_n)$.
\end{remark}

The following two lemmas are well known.
\begin{lemma}\label{lem.2.0.2}
Let $\alpha : X_1 \to X_2$ be a morphism of smooth geometrically integral varieties over $k$ satisfying $\overline{\alpha(X_1({\mathbf A}_k))}=X_2({\mathbf A}_k)$ (this is the case if $\alpha$ has a rational section over $k$). If $X_1$ satisfies strong approximation, then $X_2$ satisfies strong approximation.
\end{lemma}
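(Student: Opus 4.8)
The plan is to run a short topological argument on adelic points. Since the set $S$ is empty here, strong approximation for $X_i$ is exactly the statement that $X_i(k)$ is dense in $X_i(\mathbf{A}_k)$ for the adelic topology, so it suffices to show that every nonempty open subset $W\subseteq X_2(\mathbf{A}_k)$ contains a point of $X_2(k)$. I will use the two standard facts that a morphism of $k$-varieties $\alpha\colon X_1\to X_2$ induces a continuous map $\alpha_*\colon X_1(\mathbf{A}_k)\to X_2(\mathbf{A}_k)$ (after choosing compatible integral models it extends to a morphism over $\mathcal{O}_S$ for $S$ large) and that it carries $X_1(k)$ into $X_2(k)$.

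First I would invoke the hypothesis $\overline{\alpha(X_1(\mathbf{A}_k))}=X_2(\mathbf{A}_k)$ to pick an adelic point $(Q_v)\in X_1(\mathbf{A}_k)$ with $\alpha_*((Q_v))\in W$. Then $\alpha_*^{-1}(W)$ is a nonempty open subset of $X_1(\mathbf{A}_k)$, so strong approximation for $X_1$ yields a point $Q\in X_1(k)\cap\alpha_*^{-1}(W)$; the point $\alpha(Q)$ then lies in $X_2(k)\cap W$, as required. This settles the lemma itself.

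For the parenthetical remark, suppose $\alpha$ has a rational section $s$ over $k$, i.e.\ a morphism $s\colon V\to X_1$ defined on a dense open $V\subseteq X_2$ with $\alpha\circ s=\id_V$. Then $s$ induces $s_*\colon V(\mathbf{A}_k)\to X_1(\mathbf{A}_k)$ with $\alpha_*\circ s_*=\id$, whence $\alpha(X_1(\mathbf{A}_k))\supseteq V(\mathbf{A}_k)$. Combined with the trivial inclusion $\alpha(X_1(\mathbf{A}_k))\subseteq X_2(\mathbf{A}_k)$, the hypothesis follows once we know that $V(\mathbf{A}_k)$ is dense in $X_2(\mathbf{A}_k)$. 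This last density is the only point where real content enters: it uses smoothness of $X_2$, so that at each place $v$ the complement $(X_2\setminus V)(k_v)$ is nowhere dense in the $v$-adic manifold $X_2(k_v)$, while a model argument makes $V(\mathcal{O}_v)$ and $X_2(\mathcal{O}_v)$ agree for all but finitely many $v$; assembling these local statements gives the density. I expect this density statement to be the only step requiring care — everything else is formal.
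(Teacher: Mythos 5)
The paper offers no proof of this lemma (it is dismissed as ``well known''), so there is nothing to compare line-by-line; judged on its own, your main argument is the standard and correct one: $\alpha$ induces a continuous map $\alpha_*\colon X_1(\mathbf{A}_k)\to X_2(\mathbf{A}_k)$ carrying $X_1(k)$ into $X_2(k)$, and for a nonempty open $W\subseteq X_2(\mathbf{A}_k)$ the hypothesis $\overline{\alpha(X_1(\mathbf{A}_k))}=X_2(\mathbf{A}_k)$ makes $\alpha_*^{-1}(W)$ a nonempty open subset of $X_1(\mathbf{A}_k)$, which meets $X_1(k)$ by strong approximation for $X_1$; this settles the lemma proper. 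Note also that in every application in the paper the hypothesis on adelic images is verified directly (e.g.\ via the torsor statement of Lemma~\ref{lem.2.0.1} or the explicit computation in Corollary~\ref{G/P}), so the parenthetical remark is never load-bearing.

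Your justification of that parenthetical, however, contains one inaccurate step: it is \emph{not} true that after spreading out, $\mathcal{V}(\mathcal{O}_v)$ and $\mathcal{X}_2(\mathcal{O}_v)$ ``agree'' for all but finitely many $v$ --- already for $V=\mathbb{G}_m\subset\mathbb{A}^1_k=X_2$ they differ at every finite place, since removing a divisor removes integral points at all places. What you actually need, to produce a point of the image of $V(\mathbf{A}_k)$ inside a given nonempty basic open set $\prod_{v\in S}U_v\times\prod_{v\notin S}\mathcal{X}_2(\mathcal{O}_v)$, is: (i) for almost all $v$, $\mathcal{V}(\mathcal{O}_v)\neq\emptyset$ --- this follows from the Lang--Weil estimates plus Hensel's lemma, using that $V$ is smooth and geometrically integral (being a dense open of $X_2$), and with $\mathcal{V}$ chosen as an open subscheme of $\mathcal{X}_2$ these points automatically lie in $\mathcal{X}_2(\mathcal{O}_v)$; and (ii) at the remaining finitely many places, $V(k_v)$ meets $U_v$ (resp.\ $\mathcal{X}_2(\mathcal{O}_v)$), which holds because these are nonempty open subsets of the $v$-adic manifold $X_2(k_v)$ while $(X_2\setminus V)(k_v)$ has empty interior by smoothness of $X_2$. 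With ``agree for almost all $v$'' replaced by (i), your argument for the parenthetical is complete; as written, that step is false and the density of $V(\mathbf{A}_k)$ in $X_2(\mathbf{A}_k)$ is exactly the point where the missing Lang--Weil/Hensel input must enter.
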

\begin{lemma}\label{lem.2.0.3}
Let X be a smooth geometrically integral $k$-variety and $U$ be an open subset of X with complement of codimension $\ge 2$.
Let $S$ be a finite set of $\Omega_k $. Then
\begin{center}
 $U$ satisfies the arithmetic purity off $S \iff X$ satisfies the arithmetic purity off $S$.
\end{center}
\end{lemma}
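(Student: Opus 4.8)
The plan is to deduce the equivalence almost formally from Lemma~\ref{lem.2.0.2}, together with two elementary observations. The first is that codimension is insensitive to restriction to a dense open: since $U$ is a nonempty, hence dense, open subscheme of the integral scheme $X$, any locally closed subset lying in a dense open subscheme of $X$ has the same codimension there as in $X$, and $\codim(A\cup B,X)=\min\{\codim(A,X),\codim(B,X)\}$ for closed subsets $A,B$. The second is that every dense open immersion $j\colon V\hookrightarrow Y$ of integral $k$-schemes has a rational section over $k$ — the inverse isomorphism $j(V)\xrightarrow{\sim}V$, viewed as a rational map $Y\dashrightarrow V$ — so that Lemma~\ref{lem.2.0.2} applies to $j$; its proof works verbatim with an arbitrary finite set $S$ of places in place of $\emptyset$ (by continuity of the projection to ${\mathbf A}^{S}_k$), giving: if $V$ satisfies strong approximation off $S$, then so does $Y$. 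I then treat the two implications separately.

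For the implication $(\Leftarrow)$, suppose $X$ satisfies arithmetic purity off $S$ and let $V\subseteq U$ be open with $\codim(U\setminus V,U)\ge 2$. Then $V$ is open in $X$ and $X\setminus V=(X\setminus U)\cup(U\setminus V)$; since $\codim(X\setminus U,X)\ge 2$ by hypothesis and $\codim(U\setminus V,X)=\codim(U\setminus V,U)\ge 2$ by the first observation, the complement $X\setminus V$ has codimension $\ge 2$ in $X$. Hence $V$ satisfies strong approximation off $S$ by the arithmetic purity of $X$; as $V$ was arbitrary, $U$ satisfies arithmetic purity off $S$.

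For the implication $(\Rightarrow)$, suppose $U$ satisfies arithmetic purity off $S$ and let $W\subseteq X$ be open with $\codim(X\setminus W,X)\ge 2$. Put $W'=W\cap U$, a dense open subscheme of both $W$ and $U$. Its complement in $U$ is $U\cap(X\setminus W)$, of codimension $\ge\codim(X\setminus W,X)\ge 2$ (in $X$, hence in $U$), so $W'$ satisfies strong approximation off $S$ by the arithmetic purity of $U$. Applying the second observation to the dense open immersion $W'\hookrightarrow W$ of smooth geometrically integral varieties, we conclude that $W$ satisfies strong approximation off $S$. Therefore $X$ satisfies arithmetic purity off $S$.

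The only ingredient beyond this bookkeeping is the one already contained in Lemma~\ref{lem.2.0.2}: that the adelic points of a dense open subscheme of a smooth geometrically integral $k$-variety are dense in the adelic points of the whole variety — proved place by place, using at the finitely many places of small residue characteristic that local fields are infinite, and at the remaining places Hensel's lemma together with the Lang--Weil estimate. I do not expect a serious obstacle here; the one point requiring care is the codimension bookkeeping in the $(\Rightarrow)$ direction, where $W\cap U$ must be checked to be at once an open of $U$ with complement of codimension $\ge 2$ and a dense open subscheme of $W$.
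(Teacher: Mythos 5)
Your proof is correct, and its skeleton is exactly what lies behind the paper's one-line proof: the paper simply cites \cite[Proposition 2.3]{CTX13}, which packages the two things you supply by hand, namely the codimension bookkeeping (an open $V\subseteq U$ with $\codim(U\setminus V,U)\ge 2$ is an open of $X$ with $\codim(X\setminus V,X)\ge 2$, and conversely $W\cap U$ is an open of $U$ with small complement and a dense open of $W$ with small complement) and the transfer statement that strong approximation off $S$ passes from a dense open subset of a smooth geometrically integral variety to the ambient variety. Where the paper delegates this transfer to the citation, you derive it from the paper's own Lemma \ref{lem.2.0.2} (extended verbatim to the off-$S$ setting, which is harmless since $pr^S$ is continuous), justified via the standard fact that adelic points of a dense open are dense in the adelic points of the whole smooth geometrically integral variety (Lang--Weil plus Hensel at almost all places, $v$-adic density of the open via smoothness at the remaining ones); note that your upgrade step from $W\cap U$ to $W$ uses only density of the open, not the codimension-$2$ hypothesis, so it is if anything slightly more general than what is needed, and the codimension-$2$ assumption enters only through the purely formal direction and through feeding $W\cap U$ into the purity hypothesis on $U$. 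The only point I would tighten is the justification of local density at the finitely many bad places: what is used there is that $(W\setminus W')(k_v)$ is nowhere dense in the $v$-adic manifold $W(k_v)$, which follows from smoothness of $W$, not merely from the infinitude of local fields; with that phrasing the argument is complete.
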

\begin{proof}
This follows from \cite[Proposition 2.3]{CTX13}.
\end{proof}


\begin{proposition} \label{prop.2.1}
For any weighted projective space $\mathbb{P}_k(q_0, q_1,...,q_n)=\Proj(k[X_0,\cdots, X_n])$ with $q_0=1$, its smooth locus satisfies the arithmetic purity.
\end{proposition}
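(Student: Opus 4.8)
The plan is to reduce the statement, via the normalization of Remark \ref{rem 2.0.1.1}, to the case of a \emph{normalized} weighted projective space with $q_0=1$ and $r>1$ (so that $\#\{i : h\nmid q_i\}\ge 2$ for every $h>1$); after that, I would pass from $\mathbb{P}_k(1,q_1,\dots,q_n)$ to its weak projective space locus $\mathbb{P}_k^*(1,q_1,\dots,q_n)$. Since $\mathbb{P}_k^*$ is smooth (Remark \ref{rem.2.0}(4)) and its complement in $\mathbb{P}_k(1,q_1,\dots,q_n)$ is exactly $\bigcup_h S_h$, which under the $r>1$ hypothesis has codimension $\ge 2$, Lemma \ref{lem.2.0.3} says $\mathbb{P}_k(q_0,\dots,q_n)$ satisfies arithmetic purity if and only if $\mathbb{P}_k^*(q_0,\dots,q_n)$ does. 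So it suffices to prove the arithmetic purity for $\mathbb{P}_k^*(1,q_1,\dots,q_n)$.

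For the weak projective space, the idea is to exploit the ${\Bbb G}_m$-torsor $\pi:\mathbb{A}_k^*(q_0,\dots,q_n)\to\mathbb{P}_k^*(q_0,\dots,q_n)$ from Lemma \ref{lem.2.0.1}. Let $U\subset\mathbb{P}_k^*$ be open with $\codim(\mathbb{P}_k^*\setminus U,\mathbb{P}_k^*)\ge 2$, and set $Z=\mathbb{P}_k^*\setminus U$. Then $\pi^{-1}(U)=\mathbb{A}_k^*\setminus\pi^{-1}(Z)$ is an open subset of $\mathbb{A}_k^*$ whose complement also has codimension $\ge 2$ (the torsor projection is flat of relative dimension $1$). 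Now $\mathbb{A}_k^*$ is itself an open subset of ${\Bbb A}_k^{n+1}$ whose complement $\bigcup_h V(\{X_i: h\nmid q_i\})$ has codimension $\ge r>1$, hence $\ge 2$, so by the affine-space case of arithmetic purity (Wei \cite{Wei14}, Cao--Xu \cite{CX}; equivalently Lemma \ref{lem.2.0.3} applied to ${\Bbb A}_k^{n+1}\supset\mathbb{A}_k^*$, which satisfies strong approximation since ${\Bbb A}_k^{n+1}$ does) the variety $\mathbb{A}_k^*$ satisfies arithmetic purity, and therefore $\pi^{-1}(U)$ satisfies strong approximation. Because $q_0=1$, the section $X_0=1$ of $\pi$ gives a rational section over $k$, so $\overline{\pi(\pi^{-1}(U)({\mathbf A}_k))}=U({\mathbf A}_k)$; by Lemma \ref{lem.2.0.2} (or directly by Lemma \ref{lem.2.0.1}(1), which gives the stronger set-theoretic equality $\pi(\pi^{-1}(U)({\mathbf A}_k))=U({\mathbf A}_k)$) together with Lemma \ref{lem.2.0.2}, strong approximation descends from $\pi^{-1}(U)$ to $U$. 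Hence $\mathbb{P}_k^*(q_0,\dots,q_n)$ satisfies arithmetic purity, and by the reduction above so does $\mathbb{P}_k(q_0,\dots,q_n)$.

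The main obstacle I anticipate is the bookkeeping around the normalization step: to invoke Lemma \ref{lem.2.0.3} one needs $\bigcup_h S_h$ to have codimension $\ge 2$ in $\mathbb{P}_k(q_0,\dots,q_n)$, which is precisely the $r>1$ condition, and one must check that the isomorphism $\mathbb{P}_k(1,q_1,\dots,q_n)\cong\mathbb{P}_k(1,q_1/a,\dots,q_n/a)$ of Remark \ref{rem 2.0.1.1} preserves the property of being smooth along the relevant loci and is compatible with the open immersions in play, so that arithmetic purity for one gives it for the other; the hypothesis $q_0=1$ is what guarantees the normalized model still has a weight equal to $1$ and hence that $\pi$ has a rational section. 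The other point requiring a little care is verifying that $\pi^{-1}(Z)$ has codimension $\ge 2$ in $\mathbb{A}_k^*$ — this is immediate from flatness of $\pi$ of relative dimension $1$, but it is the step that lets everything be transported to affine space, where the known result is available.
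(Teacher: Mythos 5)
There is a genuine gap at the central step of your argument. You deduce strong approximation for $\pi^{-1}(U)=\mathbb{A}_k^*(q_0,\dots,q_n)\setminus\pi^{-1}(Z)$ from ``the affine-space case of arithmetic purity (Wei, Cao--Xu)''. Those results hold only \emph{off a nonempty} finite set $S$ of places, whereas Proposition \ref{prop.2.1} asserts arithmetic purity off $S=\emptyset$ (this is exactly the point of the paper, as stressed in the introduction: all previous answers to Questions \ref{q1} and \ref{q2} require $S\neq\emptyset$). Off $S=\emptyset$ the statement you need is simply false: $k^{n+1}$ is discrete in ${\mathbf A}_k^{n+1}$, and for any open $R\subset{\Bbb A}_k^{n+1}$ with complement of codimension $\ge 2$ the adelic topology on $R({\mathbf A}_k)$ refines the subspace topology, so $R(k)$ is discrete in $R({\mathbf A}_k)$ and cannot be dense (already ${\Bbb A}_k^1={\Bbb G}_a$ fails strong approximation off $\emptyset$). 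Hence $\mathbb{A}_k^*$ and $\pi^{-1}(U)$ do \emph{not} satisfy strong approximation off $\emptyset$, and the descent via Lemma \ref{lem.2.0.2} has nothing to descend from; the preliminary reductions (normalization, passage to $\mathbb{P}_k^*$ via Lemma \ref{lem.2.0.3}, and the adelic surjectivity from Lemma \ref{lem.2.0.1}(1)) are fine, but they do not repair this.

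This is precisely the difficulty the paper's proof is designed to circumvent. Instead of sourcing strong approximation from the quasi-affine total space of the torsor, the paper argues by induction on $\Xi=\#\{i: q_i>1\}$: the base case $\mathbb{P}_k^n$ is handled directly (off $\emptyset$) by an explicit geometric argument with lines and hyperplanes through well-chosen rational points, and in the inductive step $\mathbb{A}_k^*(q_0,\dots,q_n)$ is realized as an open subset, with complement of codimension $\ge 2$, of the weak locus of a \emph{larger complete} weighted projective space $\mathbb{P}_k(q'_0,\dots,q'_{n+1})$ having $\Xi-1$ weights $>1$; the torsor map $\pi$ is then extended to a morphism $\Pi$ on an open $V$ with codimension $\ge 2$ complement, with control on $\codim\,\Pi^{-1}(E)$, so that strong approximation for $V\setminus\Pi^{-1}(Z)$ comes from the inductive hypothesis applied to a smooth locus of a complete variety, and only then is pushed down through Lemmas \ref{lem.2.0.1}--\ref{lem.2.0.3}. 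If you want to keep your torsor-based outline, you must replace ``affine space satisfies arithmetic purity'' by some such compactification step (or another source of strong approximation off $\emptyset$ upstairs); as written, the proof does not go through.
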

\begin{proof}
Without loss of generality, let $\mathbb{P}_k(q_0, q_1,...,q_n)$ be a normalized weighted projective space (see Remark \ref{rem 2.0.1.1}).

Set $\Xi:=\#\{q_i|q_i>1,0 \leqslant i\leqslant n \}$. We prove this proposition by induction on $\Xi$.
\begin{enumerate}
\item Case $\Xi=0$.

The proof is similar to \cite[Lemma 2.1]{Wei14}. By \cite[Proposition 2.3]{CTX13}, we only need to show that $U=\mathbb{P}_k^n \setminus Z$ satisfies strong approximation, where $Z \subset\mathbb{P}_k^n$ is a closed subset with all irreducible components of codimension 2.
Fix a finite set $S \subset \Omega_k$ containing $\infty_k$, an integral model $\mathcal{U}$ of $U$ over $\mathcal{O}_S$ and a family $(P_v)_{v \in \Omega_k} \in \prod_{v\in S}U(k_v)\times \prod_{v\notin S}\mathcal{U}(\mathcal{O}_v)$. It suffices to find a closed subscheme $W$ of $U$ satisfying strong approximation, such that
\begin{enumerate}
\item $W(k_v)$ contains a point $P'_v$ very close to $P_v$ for all $v \in S$, \\
\item $W(k_v) \bigcap \mathcal{U}(\mathcal{O}_v) \ne \emptyset$ for all $v \notin S$.\\
\end{enumerate}

We choose a point $T \in U(k)$ arbitrarily close to $P_v$ for $v \in S$ by weak approximation. Then there exists a finite set $S' \subset \Omega_k \setminus S$ such that for $v \notin S'\cup S$, we have $T \in \mathcal{U}(\mathcal{O}_v)$. In the case $n \ge 3$, we choose
a point $T' \in U(k)$ very close to $P_v$ for $v \in S'$. Then we claim that there exists a hyperplane $H$ of $\mathbb{P}_k^n$ not containing any irreducible component of $Z$, such that $T, T' \in H$. The claim can be proved as follows. Let
 $L_0=\{f\in k[X_0,\cdots, X_n]|$ $f$ is a linear homogeneous polynomial\}. Let
 $$L_1=\{f\in L_0| f(T)=0, f(T')=0\}\cup \{0\}.$$
Then $\dim_k (L_1)\ge 2$. We write $Z=\bigcup_{\alpha\in I} Z_\alpha$, where each $Z_\alpha$ is an irreducible component of $Z$. For every $\alpha \in I$, let
\begin{center}
$L_\alpha=\{f \in L_0|f(Z_\alpha)=0\}\cup \{0\}$.
\end{center}
Then $\dim_k (L_\alpha)\le 2$ for any $\alpha \in I$. Since $T\notin Z$, one concludes that $\dim_k(L_\alpha\cap L_1)\le 1$ for any $\alpha \in I$. Thus such
a hyperplane $H$ exists.

Note that $H \cap U$ satisfies (a) and (b), it suffices to show that $H \cap U$ satisfies strong approximation. Note that the codim$(H\setminus H \cap U,H)=2$, thus we reduce the proof to the case $n=2$. Now
we choose a point $T' \in U(k)$ very close to $P_v$ for $v \in S'$, not contained in the union of projective lines through $T$ meeting $Z$. Then we can take $W$ to be the projective line through $T$ and $T'$. Now the case of $\Xi=0$ is established.
\item For $\Xi>0$, we build an induction. Without loss of generality, we assume

(1) $q_i=1$ for $0\leqslant i\leqslant \mu-1$, where $\mu \ge1$,

(2)$q_i>1$ for $\mu\leqslant i\leqslant n$.\\
Consider the scheme $\mathbb{P}_k(q'_0, q'_1,\cdots, q'_{n+1})=\Proj(k[Y_0,\cdots, Y_{n+1}])$, where

\begin{center}
$q'_j=1$ for $0 \leqslant j\leqslant {\mu +1}$, \ \ $q'_j=q_{j-1}$ for $ {\mu +2} \leqslant j\leqslant {n+1}.$
\end{center}
Let\\

$D_+(X_0)=\Spec (k[\frac{X_0}{X_0},\cdots,\frac{X_{\mu -1}}{X_0},\frac{X_{\mu }}{X^{q_{\mu}}_0},\frac{X_{\mu +1}}{X^{q_{\mu +1}}_0},\cdots,\frac{X_{n}}{X^{q_n}_0} ]) \subset \mathbb{P}_k^*(q_0, q_1,...,q_n)$,

$D_+(Y_0)=\Spec (k[\frac{Y_1}{Y_0},\cdots,\frac{Y_{\mu +1}}{Y_0},\frac{Y_{\mu +2}}{Y^{q_{\mu +1}}_0},\frac{Y_{\mu +3}}{Y^{q_{\mu +2}}_0},\cdots,\frac{Y_{n+1}}{Y^{q_n}_0} ]) \subset \mathbb{P}_k^*(q'_0, q'_1,\cdots, q'_{n+1})$,

$D_+(Y_1)=\Spec (k[\frac{Y_0}{Y_1},\frac{Y_1}{Y_1},\cdots,\frac{Y_{\mu}}{Y_1}, \frac{Y_{\mu +1}}{Y_1},\frac{Y_{\mu +2}}{Y^{q_{\mu +1}}_1},\frac{Y_{\mu +3}}{Y^{q_{\mu +2}}_1},\cdots,\frac{Y_{n+1}}{Y^{q_n}_1} ]) \subset \mathbb{P}_k^*(q'_0, q'_1,\cdots, q'_{n+1})$.

Define the map
\begin{center}
$\phi: k[X_0,\cdots, X_n] \to k[\frac{Y_1}{Y_0},\cdots,\frac{Y_{\mu +1}}{Y_0},\frac{Y_{\mu +2}}{Y^{q_{\mu +1}}_0},\frac{Y_{\mu +3}}{Y^{q_{\mu +2}}_0},\cdots,\frac{Y_{n+1}}{Y^{q_n}_0} ]$

$X_i \mapsto \frac{Y_{i+1}}{Y^{q'_{i+1}}_0}$ \  ($0 \leqslant i \leqslant n$).
\end{center}
Then $\phi$ induces an open immersion
$$\phi^*:\mathbb{A}_k^*(q_0, q_1,...,q_n)\hookrightarrow D_+(Y_0).$$
We identity $\mathbb{A}_k^*(q_0, q_1,...,q_n)$ as an open subset of $\mathbb{P}_k^*(q'_0, q'_1,\cdots, q'_{n+1})$. We claim:

There exists an open subset $V\subset \mathbb{P}_k^*(q'_0, q'_1,\cdots, q'_{n+1})$ with complement of codimension $\ge 2$, such that
the natural morphism $\pi: \mathbb{A}_k^*(q_0, q_1,...,q_n) \to \mathbb{P}_k^*(q_0, q_1,...,q_n)$ can be extended to a morphism $\Pi :V \to \mathbb{P}_k^*(q_0, q_1,...,q_n)$ satisfying that for any closed subset $E \subset \mathbb{P}_k^*(q_0, q_1,...,q_n)$ of codimension $\ge 2$, the closed subset $\Pi^{-1}(E)$ has codimension $\ge 2$ in $V$.

If such $V$ and morphism $\Pi$ exist, we can see that
\begin{center}
$\mathbb{P}_k(q'_0, q'_1,\cdots, q'_{n+1})_{sm}$ satisfies the arithmetic purity  $\Longrightarrow$ $\mathbb{P}_k(q_0, q_1,...,q_n)_{sm}$ satisfies the arithmetic purity

\end{center}
by Lemma \ref{lem.2.0.1}, \ref{lem.2.0.2} and \ref{lem.2.0.3}.
Note that $\Xi(\mathbb{P}_k(q'_0, q'_1,\cdots, q'_{n+1}))=\Xi(\mathbb{P}_k(q_0, q_1,...,q_n))-1$, the induction can be established.

Now we prove this claim. Denote by $\hat{\phi}$ the map of function fields induced by $\pi $
\begin{center}
$\hat{\phi}:K(D_+(X_0)) \to K((D_+(Y_1)).$
\end{center}
One can check that
\begin{center}
 $\hat{\phi}(k[\frac{X_0}{X_0},\cdots,\frac{X_{\mu -1}}{X_0},\frac{X_{\mu }}{X^{q_{\mu}}_0},\frac{X_{\mu +1}}{X^{q_{\mu +1}}_0},\cdots,\frac{X_{n}}{X^{q_n}_0} ])= k[\frac{Y_1}{Y_1},\cdots,\frac{Y_{\mu}}{Y_1}, \frac{Y_{\mu +1}}{Y_1}(\frac{Y_0}{Y_1})^{q_{\mu}-1},\frac{Y_{\mu +2}}{Y^{q_{\mu +1}}_1},\frac{Y_{\mu +3}}{Y^{q_{\mu +2}}_1},\cdots,\frac{Y_{n+1}}{Y^{q_n}_1} ]$.
\end{center}
Hence
\begin{center}
$\hat{\phi}(k[\frac{X_0}{X_0},\cdots,\frac{X_{\mu -1}}{X_0},\frac{X_{\mu }}{X^{q_{\mu}}_0},\frac{X_{\mu +1}}{X^{q_{\mu +1}}_0},\cdots,\frac{X_{n}}{X^{q_n}_0} ])\subset k[\frac{Y_0}{Y_1}, \frac{Y_1}{Y_1},\cdots,\frac{Y_{\mu}}{Y_1}, \frac{Y_{\mu +1}}{Y_1},\frac{Y_{\mu +2}}{Y^{q_{\mu +1}}_1},\frac{Y_{\mu +3}}{Y^{q_{\mu +2}}_1},\cdots,\frac{Y_{n+1}}{Y^{q_n}_1} ].$
\end{center}
We thus get a morphism $\pi': D_+(Y_1) \to D_+(X_0)$, in particular the divisor defined by $Y_0=0$ dominates the divisor defined by $X_\mu=0$. Since $\pi$ is flat, we obtain that $\pi^{-1}(E)$ has codimension $\ge 2$ in $\mathbb{A}_k^*(q_0, q_1,...,q_n)$
for any closed subset $E \subset \mathbb{P}_k^*(q_0, q_1,...,q_n)$ of codimension $\ge 2$. Since the complement of
$\mathbb{A}_k^*(q_0, q_1,...,q_n)$ in $D_+(Y_0)$ has codimension $\ge 2$, then we can take $V=\mathbb{A}_k^*(q_0, q_1,...,q_n) \bigcup D_+(Y_1)$. The morphisms $\pi $ and $\pi'$ are obviously patched together and we obtain $\Pi$.

\end{enumerate}
\end{proof}

\begin{remark}
Proposition \ref{prop.2.1} is also true for any weighted projective space, see Corollary \ref{cor.2.4.1}.
\end{remark}

\begin{corollary}\label{G/P}
Let $G$ be a reductive group over $k$, and $P$ be a parabolic subgroup of $G$ over $k$. Then $G/P$ satisfies the arithmetic purity.
\end{corollary}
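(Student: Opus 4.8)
The plan is to run, for $X:=G/P$, the same ``cut by a rational curve'' argument that proves the base case $\Xi=0$ of Proposition~\ref{prop.2.1}. Observe first that $X$ is smooth, projective and geometrically integral, that over $\bar k$ it is a generalized flag variety (so $\Pic(X_{\bar k})$ is finitely generated and $\bar k[X]^\times=\bar k^\times$), and that $X$ satisfies the Hasse principle and weak approximation, as is classical for projective homogeneous spaces under a connected group. Since $X$ is projective, strong approximation off $\emptyset$ for $X$ and for its open subvarieties is just the corresponding weak approximation statement. It is convenient also to note that $G\to X$, and more generally the map $\pi\colon G/P_0\to X$ for a minimal parabolic $P_0\subseteq P$ of $G$ over $k$, is smooth and surjective and admits a rational $k$-section, namely $R_u(P^-)\hookrightarrow G\to X$ over the big cell $\Omega\cong\mathbb{A}^N$, where $P^-$ is an opposite parabolic over $k$ (such a $P^-$ exists once one fixes a Levi of $P$ over $k$). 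By Lemmas~\ref{lem.2.0.2} and~\ref{lem.2.0.3}, arithmetic purity then passes from $G/P_0$ to $X$: for $Z\subset X$ closed of codimension $\ge2$ the set $\pi^{-1}(Z)$ has codimension $\ge2$, the section restricts to a rational $k$-section of $G/P_0\setminus\pi^{-1}(Z)\to X\setminus Z$, and strong approximation descends. So one may reduce to $P$ minimal if desired, though this is not strictly needed below.

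For the main argument, fix a closed $Z\subset X$ with $\codim(Z,X)\ge2$ and put $U=X\setminus Z$. Fix a finite $S\supseteq\infty_k$, an $\mathcal{O}_S$-model $\mathcal{U}$ of $U$, and a point $(P_v)_v\in\prod_{v\in S}U(k_v)\times\prod_{v\notin S}\mathcal{U}(\mathcal{O}_v)$. Using weak approximation on $X$, pick $T\in U(k)$ close to $P_v$ for $v\in S$, and enlarge $S$ to a finite set $S_1$ so that $T\in\mathcal{U}(\mathcal{O}_v)$ for all $v\notin S_1$; for the finitely many $v\in S_1\setminus S$ pick $T'\in U(k)$ close to $P_v$. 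As in the proof of Proposition~\ref{prop.2.1}, it then suffices to produce a closed subvariety $W\subseteq X$ with $W\cong\mathbb{P}^1_k$, passing through both $T$ and $T'$ and disjoint from $Z$: then $W\subseteq U$ satisfies strong approximation (being $\cong\mathbb{P}^1$); it has points $v$-adically close to $P_v$ for $v\in S_1$ (namely $T$ for $v\in S$, and $T'$ for $v\in S_1\setminus S$) and points in $\mathcal{U}(\mathcal{O}_v)$ for $v\notin S_1$ (namely $T$); and strong approximation on $W$ produces the desired $k$-point of $U$. To build $W$: $X$ being a rational homogeneous variety, for $d\gg0$ the evaluation map $\mathrm{Mor}_d(\mathbb{P}^1,X)\to X\times X$ is dominant, the fibre over $(T,T')$ is nonempty and unirational over $k$ (by the homogeneity of $X$), hence has a Zariski-dense set of $k$-points, a general member of this fibre is a smooth rational curve, hence isomorphic to $\mathbb{P}^1_k$, and — since $\codim(Z,X)\ge2$ — a general member avoids $Z$ (the curves meeting $Z$ form a locus of codimension $\ge1$ in the fibre). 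A $k$-point of the fibre in general position provides $W$.

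The crux is this last construction: exhibiting, over $k$, a smooth rational curve through the two prescribed $k$-points and avoiding the codimension-$\ge2$ locus $Z$. When $G$ is split or quasi-split this is transparent — the relevant spaces of rational curves are explicitly rational over $k$ — and for $X=\mathbb{P}^n$ it is exactly the line-and-hyperplane argument used in the base case of Proposition~\ref{prop.2.1}. In general one would first reduce to that setting, using the section argument of the first paragraph to pass to a minimal parabolic and, if necessary, a restriction-of-scalars/descent step to replace $G$ by its quasi-split inner form; carrying out this reduction cleanly, and controlling the geometry of minimal rational curves on $X$ over a non-closed base field, is where the genuine difficulty lies.
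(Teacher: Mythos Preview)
Your proposal is not a complete proof, and you say as much in the final paragraph. The gap is exactly where you locate it: the assertion that the fibre of the evaluation map $\mathrm{Mor}_d(\mathbb{P}^1,X)\to X\times X$ over $(T,T')$ is unirational over $k$. The phrase ``by the homogeneity of $X$'' does not justify this: $G$ acts transitively on $X$ but not on $X\times X$, so fibres over different pairs of points need not be isomorphic, and unirationality (or even the existence of a Zariski-dense set of $k$-points) of a specific fibre is a nontrivial statement about the geometry of rational curves on $G/P$ over the ground field. Without it you cannot pick a $k$-point of the fibre in general position, and the construction of $W$ collapses. The reduction to a minimal parabolic in your first paragraph does not help either: you would still have to prove arithmetic purity for $G/P_0$, and nothing in that case is easier.

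The paper sidesteps this difficulty entirely by pushing the problem back to $\mathbb{P}^n$, where the rational-curve argument is already done (the case $\Xi=0$ of Proposition~\ref{prop.2.1}). The big cell you mention gives an open immersion $\mathbb{A}^n\hookrightarrow G/P$, hence a rational map $j:\mathbb{P}^n\dashrightarrow G/P$ defined outside a closed subset $Z$ of codimension $\ge2$. The hyperplane at infinity $C=\mathbb{P}^n\setminus\mathbb{A}^n$ is irreducible, and $j(C\setminus Z)$ has closure some proper subvariety $D\subset G/P$ which visibly contains $k$-points. Given the codimension-$\ge2$ closed subset $W\subset G/P$, one uses the transitivity of $G(k)$ on $(G/P)(k)$ to translate $W$ so that $D\not\subset W$: pick $\bar q_1\in D(k)$ and $\bar q_2\in (G/P)(k)\setminus W$, lift to $q_1,q_2\in G(k)$, and replace $W$ by $q_1q_2^{-1}W$. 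After this translation the only divisor of $\mathbb{P}^n\setminus Z$ not meeting $\mathbb{A}^n$ is $C\setminus Z$, and it is not contained in $j^{-1}(W)$; on $\mathbb{A}^n$ the map $j$ is an open immersion. Hence $j^{-1}(W)$ has codimension $\ge2$, and Proposition~\ref{prop.2.1} together with Lemmas~\ref{lem.2.0.2} and~\ref{lem.2.0.3} finishes. This uses only the existence of the big cell and the transitivity of $G(k)$ on $(G/P)(k)$, and requires no control of spaces of rational curves on $G/P$ over $k$.
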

\begin{proof}
By the proof of \cite[Proposition 20.5]{Bor}, we obtain that $G/P$ contains an open subset $U$ which is isomorphic to ${\Bbb A}_k^{n}$, where $n=\dim(G/P)$. Then we have a rational map
$$j :\mathbb{P}_k^n \dashrightarrow G/P$$
induced by $U\cong {\Bbb A}_k^{n}\subset \mathbb{P}_k^n$. Since $G/P$ is projective and $\mathbb{P}_k^n$ is normal, there is a closed subset $Z \subset \mathbb{P}_k^n$ of codimension $\ge2$ such that $j:{\mathbb{P}_k^n \setminus Z} \to G/P$ is a morphism. Set
$$C:=\mathbb{P}_k^n \setminus U.$$
We denote by $D$ the closure of $j(C\cap(\mathbb{P}_k^n \setminus Z))$ in $G/P$. Note that $D$ has $k$-points and we take a $k$-point $\bar{q}_1 \in D(k)$. By \cite[Proposition 20.5]{Bor}, the map
 \begin{equation}\label{G}
   \phi:G(k)\to (G/P)(k)
 \end{equation}
is surjective, hence there is a $k$-point ${q}_1 \in G(k)$ such that $\phi(q_1)=\bar{q}_1$.

Now we show that $(G/P)\setminus W$ satisfies strong approximation for any closed subset $W\subset G/P$ of codimension $\ge2$.
Since $(G/P)(k)$ is dense in $G/P$, we can take a $k$-point $\bar{q}_2 \in (G/P)(k)$ not belonging to $W$. By surjectivity of (\ref{G}), there is a $k$-point ${q}_2 \in G(k)$ such that $\phi(q_2)=\bar{q}_2$ . Then we have $q_2q_1^{-1}D \not \subset W$, and hence $D \not \subset q_1q_2^{-1}W$. Since $$(G/P)\setminus W \cong (G/P)\setminus q_1q_2^{-1}W,$$ we only need to show that
$(G/P)\setminus q_1q_2^{-1}W$ satisfies strong approximation. Consider the following map
$$j: \mathbb{P}_k^n \setminus (Z \cup j^{-1}(q_1q_2^{-1}W)) \to (G/P)\setminus q_1q_2^{-1}W,$$
we have
 $$\codim(j^{-1}(q_1q_2^{-1}W), \mathbb{P}_k^n \setminus Z) \ge 2, \ \ \overline{j(\mathbb{P}_k^n \setminus (Z \cup j^{-1}(q_1q_2^{-1}W)))({\mathbf A}_k)}=((G/P)\setminus q_1q_2^{-1}W)({\mathbf A}_k).$$
Hence $(G/P)\setminus q_1q_2^{-1}W$ satisfies strong approximation by Lemma \ref{lem.2.0.2}, \ref{lem.2.0.3} and Proposition \ref{prop.2.1}.

\end{proof}

\begin{proposition}\label{prop.2.2.0}
Let $X$ be a $k$-form of $\mathbb{P}_k(q_0, q_1,...,q_n)$. If $X_{sm}(k)\ne \emptyset$, then $X\cong \mathbb{P}_{k}(q_0, q_1,...,q_n)$.
\end{proposition}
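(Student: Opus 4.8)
The plan is to deduce this from the structure of the weighted projective space as a toric variety, or more directly via descent. First I would recall that $X$, being a $k$-form of $\mathbb{P}_k(q_0,\dots,q_n)$, becomes isomorphic to $\mathbb{P}_{\bar k}(q_0,\dots,q_n)$ after base change to $\bar k$; the set of such forms is classified by $H^1(\Gamma_k, \Aut(\mathbb{P}_{\bar k}(q_0,\dots,q_n)))$. I would identify the automorphism group, or at least enough of it to see that the obstruction to being the split form lives in the Picard group or in a torus cohomology group that vanishes when a rational point is present. The key geometric input should be that $\mathbb{P}_{\bar k}^*(q_0,\dots,q_n)$ is the canonical ``smooth enough'' open locus (Remark \ref{rem.2.0}(3)), hence $\Gamma_k$-stable, and that it carries the line bundle $\mathrm{O}^*(1)$ which is likewise canonical up to the Galois action on $\Pic \cong \ZZ$. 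Since $\Pic(\mathbb{P}_{\bar k}^*) \cong \ZZ$ has no nontrivial automorphisms over $\ZZ$ other than $\pm 1$, and $\mathrm{O}^*(1)$ (or its suitable power, the ample generator) is Galois-invariant, the relevant $H^1$ reduces to something manageable.

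Concretely, the approach I would carry out: let $L$ be the ample generator of $\Pic(X_{\bar k}^{sm}) \cong \ZZ$ (using Lemma \ref{lem.2.0.1}(2), after reducing to a normalized weighted projective space so that $r > 1$); this $L$ is $\Gamma_k$-invariant, hence descends to a line bundle on $X_{sm}$ up to a twist by an element of $\Br(k)$ — but the presence of a rational point $x_0 \in X_{sm}(k)$ kills this Brauer obstruction, so $L$ itself descends to a $k$-line bundle $\mathcal{L}$ on $X_{sm}$, and indeed on $X$ by normality and the codimension $\ge 2$ of the complement. Then I would compute the graded ring $\bigoplus_{d \ge 0} H^0(X, \mathcal{L}^{\otimes d})$; over $\bar k$ this is the homogeneous coordinate ring (or its Veronese), so by flat base change the $k$-algebra $\bigoplus_d H^0(X,\mathcal{L}^{\otimes d})$ is a form of $\bar k[X_0,\dots,X_n]$ with the weighted grading, and in particular $H^0(X,\mathcal{L}^{\otimes q_i})$ contains a section playing the role of $X_i$. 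Taking $\Proj$ recovers $X$, and the point is to produce an isomorphism of graded $k$-algebras with $k[X_0,\dots,X_n]$ — equivalently to show the Galois action on the degree-$m$ piece (with $m = \lcm(q_i)$, where a basis of monomials is identified) is trivial after a linear change of coordinates.

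Alternatively, and perhaps more cleanly, I would use the torus description: $\mathbb{P}_k(q_0,\dots,q_n) = U/\Gm$ where $U = \AA_k^{n+1} \setminus \{0\}$ with the weighted $\Gm$-action, so a $k$-form $X$ of it corresponds to a form of this quotient, i.e.\ to a class in $H^1(k, \PGL$-type group$)$, and a rational point in $X_{sm}(k)$ lifts (by Hilbert 90 applied to the $\Gm$-torsor $\pi$ on $\mathbb{P}_k^*$, exactly as in the proof of Lemma \ref{lem.2.0.1}(1)) to a rational point on the corresponding form of $U$; since $U \subset \AA^{n+1}$ and a form of $\AA^{n+1}$ with a rational point given by a linear action is split by the no-name lemma / triviality of $H^1$ with $\mathbb{G}_a$ or by an explicit eigenbasis argument, one concludes $X \cong \mathbb{P}_k(q_0,\dots,q_n)$.

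The main obstacle I expect is pinning down the automorphism group of $\mathbb{P}_{\bar k}(q_0,\dots,q_n)$ and, more precisely, controlling the linear part of the Galois action on the coordinates: automorphisms of a weighted projective space are \emph{not} all linear (there are triangular ``Kaji-type'' automorphisms mixing $X_i$ with monomials of the same weight in variables of lower weight), so the cocycle lives in a group that is a semidirect product of a reductive part (products of $\mathrm{GL}$'s grouped by equal weights, modulo $\Gm$) with a unipotent radical; I will need Hilbert 90 / triviality of $H^1$ of the unipotent part (since it is an iterated extension of vector groups) to reduce to the reductive part, and then triviality of $H^1$ of $\prod \mathrm{GL}_{r_i}$ (again Hilbert 90), while checking that the rational point indeed provides a splitting of the central $\Gm$ to pass from $\mathrm{PGL}$-type to $\mathrm{GL}$-type cohomology — that last step is where the hypothesis $X_{sm}(k)\neq\emptyset$ is really used.
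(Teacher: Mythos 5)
Your second, ``concrete'' paragraph is essentially the paper's own proof: after reducing to a normalized weighted projective space (so $r>1$), one shows the Galois action on $\Pic((X_{\bar k})_{sm})\cong\ZZ$ is trivial (the generator is distinguished from its inverse because only one of them has nonzero sections), uses the smooth rational point to make $\Br(k)\to\Br_1(X_{sm})$ and $\Br(k)\to\Br_1(X)$ injective in the Hochschild--Serre sequence so the generator descends, identifies the total section ring of its powers with the weighted polynomial ring, and recovers $X$ via $\Proj$. One correction: your claim that $L$ extends to a line bundle on all of $X$ ``by normality and the codimension $\ge 2$ of the complement'' is false in general --- on a singular weighted projective space $\mathrm{O}(1)$ is not invertible at the singular points (already for $\mathbb{P}_k(1,1,2)$); normality only yields a reflexive/Weil-divisorial extension. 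The paper avoids this precisely by the Veronese device you mention only in passing: it descends $\mathcal{F}$ on $X_{sm}$, separately descends $\mathcal{L}\cong\mathrm{O}(m)$ with $m=\lcm(q_0,\dots,q_n)$, which is a genuine ample line bundle on all of $X$ (using $\Pic(X)\cong\Pic(X_{\bar k})$), and takes $\Proj$ of $\bigoplus_l H^0(X,\mathcal{L}^{\otimes l})\cong k[X_0,\dots,X_n]^{(m)}$; the ampleness of $\mathcal{L}$ is what produces the map to $\Proj$ that is an isomorphism over $\bar k$. Your first and third threads (nonabelian $H^1$ of $\Aut(\mathbb{P}_{\bar k}(q_0,\dots,q_n))$, respectively splitting a form of the punctured weighted affine cone) are not needed and are left genuinely incomplete --- as you yourself note, they hinge on the structure of the non-reductive automorphism group, which you do not establish --- so the argument should rest on the line-bundle/section-ring route. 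Finally, the step you flag at the end (a graded $k$-form $R$ of $\bar k[X_0,\dots,X_n]$ is itself a weighted polynomial ring) is standard: lift a homogeneous $k$-basis of the indecomposables $R_+/R_+^2$ and compare Hilbert functions after base change; the paper asserts this at the same level of detail.
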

\begin{proof}
Without loss of generality, let $\mathbb{P}_k(q_0, q_1,...,q_n)$ be a normalized weighted projective space (see Remark \ref{rem 2.0.1.1}).

Consider the natural morphism
$$\bar{\pi}: \mathbb{A}_{\bar{k}}^*(q_0, q_1,...,q_n) \to \mathbb{P}_{\bar{k}}^*(q_0, q_1,...,q_n).$$
By \cite[Lemma 3B.2 (b)]{wps}, we have
$$\bar{\pi}(\mathscr{O}_{\mathbb{A}_{\bar{k}}^*(q_0, q_1,...,q_n)})=\bigoplus_{l\in \Bbb Z}{\mathrm{O}^*(l)}.$$
Since $\codim({\Bbb A}_{\bar{k}}^{n+1} \setminus \mathbb{A}_{\bar{k}}^*(q_0, q_1,...,q_n), {\Bbb A}_{\bar{k}}^{n+1}) \ge 2$, we obtain that
$$H^0(\mathbb{A}_{\bar{k}}^*(q_0, q_1,...,q_n),\mathscr{O}_{\mathbb{A}_{\bar{k}}^*(q_0, q_1,...,q_n)})=\bar{k}[X_0,\cdots, X_n].$$
Thus we have
\begin{equation}\label{A.0}
\bigoplus_{l\in \Bbb Z}H^0(\mathbb{P}_{\bar{k}}^*(q_0, q_1,...,q_n),\mathrm{O}^*(l))\cong \bar{k}[X_0,\cdots, X_n].
\end{equation}

Since codim$(\mathbb{P}_{\bar{k}}(q_0, q_1,...,q_n)_{sm}\setminus \mathbb{P}_{\bar{k}}^*(q_0, q_1,...,q_n), \mathbb{P}_{\bar{k}}(q_0, q_1,...,q_n)_{sm}) \ge 2$, we have

$$\Pic(\mathbb{P}_{\bar{k}}(q_0, q_1,...,q_n)_{sm}) \cong \Pic(\mathbb{P}_{\bar{k}}^*(q_0, q_1,...,q_n))\cong \Bbb Z$$
by Lemma \ref{lem.2.0.1}. Note that $\mathrm{O}^*(1)$ generates $\Pic(\mathbb{P}_{\bar{k}}^*(q_0, q_1,...,q_n))$ by \cite[Proposition 2.3]{Mori} and $\mathrm{O}^*(1)^{\otimes l}\cong \mathrm{O}^*(l)$ for every $l\in \Bbb Z$ (see Remark \ref{rem.2.0}). Then there exists a line bundle $\overline{\mathcal{F}}$ on $\mathbb{P}_{\bar{k}}(q_0, q_1,...,q_n)_{sm}\cong (X_{\bar k})_{sm}$ satisfying $\overline{\mathcal{F}}|_{\mathbb{P}_{\bar{k}}^*(q_0, q_1,...,q_n)}\cong \mathrm{O}^*(1)$ and we have
\begin{equation}\label{A.1}
\bigoplus_{l\in \Bbb Z}H^0(\mathbb{P}_{\bar{k}}(q_0, q_1,...,q_n)_{sm},\overline{\mathcal{F}}^{\otimes{l}})\cong \bar{k}[X_0,\cdots, X_n]
\end{equation}
by (\ref{A.0}). Since $\overline{\mathcal{F}}^{\otimes{(-1)}}$ has no global sections, we conclude that the action of $\Gamma_k$ on $\Pic((X_{\bar k})_{sm})$ is trivial. Since $X_{\bar k}$ is normal, one concludes that
$\codim(X_{\bar k}\setminus (X_{\bar k})_{sm},  X_{\bar k})$ $\ge 2$, then we have $\Pic(X_{\bar k})\hookrightarrow \Cl(X_{\bar k})=\Pic((X_{\bar k})_{sm})$. Hence $\Gamma_k$ acts trivially on $\Pic(X_{\bar k})$.

 By \cite[Corollary 2.3.9]{Sko}, we have the following exact sequence
$$
  \begin{CD}
    0 @>>>\Pic(X) @>>> \Pic(X_{\bar k}) @>>>\Br(k)@>{p}>>\Br_1(X) \\
    @. @VVV @V V  V @VVV @VVV \\
    0 @>>>\Pic(X_{sm}) @>>> \Pic((X_{\bar k})_{sm})@>>>\Br(k) @>{p'}>> \Br_1(X_{sm}).
  \end{CD}
$$

Since $X_{sm}(k)\ne \emptyset$, the maps $p$ and $p'$ are injective, one concludes that
$$\Pic(X_{sm})\cong \Pic((X_{\bar k})_{sm}), \ \  \Pic(X)\cong \Pic(X_{\bar k}).$$
Therefore, there is a line bundle $\mathcal{F}$ on $X_{sm}$ which is isomorphic to $\overline{\mathcal{F}}$ over $\bar{k}.$ Note that $\mathrm{O}(m)$ is an ample line bundle on $\mathbb{P}_{\bar{k}}(q_0, q_1,...,q_n)$ by \cite[Lemma 1.3]{Mori}, where $m=\lcm(q_0, q_1,\cdots, q_n).$ Then there is a line bundle $\mathcal{L}$ on $X$ which is isomorphic to $\mathrm{O}(m)$ over $\bar{k}$ and we have $$\mathcal{L}|_{X_{sm}}\cong \mathcal{F}^{\otimes{m}}.$$
Since $X$ is normal, we have $\codim(X\setminus X_{sm},  X)\ge 2,$ one concludes that
\begin{equation}\label{A.2}
H^0(X,\mathcal{L}^{\otimes{l}})=H^0(X_{sm},\mathcal{F}^{\otimes{lm}})
\end{equation}
for ${l\in \Bbb Z}$. Note that $H^0(\mathbb{P}_{\bar{k}}(q_0, q_1,...,q_n)_{sm},\overline{\mathcal{F}}^{\otimes{l}})=H^0(X_{sm},\mathcal{F}^{\otimes{l}})\otimes_k\bar{k},$ then we have
 $$\bigoplus_{l\in \Bbb Z}H^0(X_{sm},\mathcal{F}^{\otimes{l}})\otimes_k{\bar{k}}\cong \bar{k}[X_0,\cdots, X_n]$$
by (\ref{A.1}). This also implies that $$\bigoplus_{l\in \Bbb Z}H^0(X_{sm},\mathcal{F}^{\otimes{l}})\cong k[X_0,\cdots, X_n].$$
 Hence we have
$$\bigoplus_{l\in \Bbb Z}H^0(X,\mathcal{L}^{\otimes{l}})\cong k[X_0,\cdots, X_n]^{(m)}$$
by (\ref{A.2}). This induces a $k$-rational map $X \dashrightarrow \Proj(k[X_0,\cdots, X_n]^{(m)})\cong \mathbb{P}_k(q_0, q_1,...,q_n)$ (cf. \cite[\S 3.7]{EGAII}),
which becomes an isomorphism
over $\bar{k},$ hence $X\cong \mathbb{P}_{k}(q_0, q_1,...,q_n)$.
\end{proof}
\begin{remark}
When $q_0=\cdots=q_n=1$, Proposition \ref{prop.2.2.0} is a theorem of Ch\^atelet (see \cite[Theorem 5.1.3]{GS06}).
\end{remark}
\begin{corollary}\label{k-form}
Let $X$ be a $k$-form of $\mathbb{P}_k(q_0, q_1,...,q_n)$ with $q_0=1$. If $X_{sm}(k)\ne \emptyset$, then $X_{sm}$ satisfies the arithmetic purity.
\end{corollary}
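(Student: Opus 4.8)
The plan is to deduce this statement formally from the two preceding results, Proposition \ref{prop.2.2.0} and Proposition \ref{prop.2.1}, with no new work required. First I would invoke Proposition \ref{prop.2.2.0}: since $X$ is a $k$-form of $\mathbb{P}_k(q_0, q_1,...,q_n)$ and $X_{sm}(k)\ne \emptyset$, that proposition gives a $k$-isomorphism $X \cong \mathbb{P}_k(q_0, q_1,...,q_n)$. Such an isomorphism is automatically compatible with the formation of smooth loci (smoothness is a local property preserved by isomorphism), so it restricts to a $k$-isomorphism $X_{sm} \cong \mathbb{P}_k(q_0, q_1,...,q_n)_{sm}$.

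Next I would observe that the arithmetic purity (off $\emptyset$) is an isomorphism-invariant property of a $k$-variety: the defining condition refers only to $k$-rational points, adelic points, and codimensions of complements of Zariski opens, all of which transport along a $k$-isomorphism. Hence it is enough to verify that $\mathbb{P}_k(q_0, q_1,...,q_n)_{sm}$ satisfies the arithmetic purity, and this is precisely Proposition \ref{prop.2.1}, which applies here because of the hypothesis $q_0 = 1$. This completes the argument.

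I do not expect any real obstacle: the corollary is a direct combination of Propositions \ref{prop.2.2.0} and \ref{prop.2.1}. The only point worth flagging is that the use of Proposition \ref{prop.2.1} is exactly what forces the hypothesis $q_0=1$ to appear in the statement; once the weight restriction is removed in Corollary \ref{cor.2.4.1}, the same reduction will upgrade this to arbitrary $k$-forms of weighted projective spaces, but for the present statement the stated hypothesis is what is needed.
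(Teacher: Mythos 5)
Your proposal is correct and follows exactly the paper's own argument, which simply cites Propositions \ref{prop.2.1} and \ref{prop.2.2.0}; you merely spell out the (routine) transport of the arithmetic purity property along the $k$-isomorphism $X \cong \mathbb{P}_k(q_0, q_1,...,q_n)$ and its restriction to smooth loci. No issues.
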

\begin{proof}
It follows from Proposition \ref{prop.2.1} and \ref{prop.2.2.0}.
\end{proof}

\section{Proofs of the main results}

\subsection{Proof of Theorem \ref{t.2.3}}
As in the proof of \cite[Theorem 1.1]{Wei14}, we use the descent theory and combinatorial description of toric varieties.
Let $\widehat{T}$ be the character group of $T$ which is a $\Gamma_k$-module. Denote
\begin{equation}\label{a}
 N := \Hom(\widehat{T},\Bbb Z)
\end{equation}
which has a $\Gamma_k$-action such that the pairing$\langle \cdot , \cdot \rangle: N \times \widehat{T} \to \Bbb Z$ is $\Gamma_k$-invariant. Set $N_{\Bbb R}:= N \otimes \Bbb R$ and let $\Sigma \subset N_{\Bbb R}$ be the fan of $X$ which is $\Gamma_k$-invariant, i.e., for any $\sigma \in \Sigma$ and $g \in \Gamma_k$, we have $g(\sigma) \in \Sigma$.

Since $X$ is normal, we have $\codim(X\setminus X_{sm}, X)\ge 2$ and hence $\bar{k}[X_{sm}]^\times={\bar{k}}^\times$.
Let $\Div_{T_{\bar k}}((X_{\bar k})_{sm})$ ($\cong \Div_{T_{\bar k}}(X_{\bar k})$) be the group of $T_{\bar k}$-invariant Weil divisors of $(X_{\bar k})_{sm}$. By \cite[Theorem 4.1.3]{CJH}, we have the following exact sequence
$$0\to \widehat{T} \xrightarrow{\text{div}} \Div_{T_{\bar k}}((X_{\bar k})_{sm}) \to \Pic((X_{\bar k})_{sm}) \to 0. $$
By \cite[Proposition 6.1.4]{Sko}, the universal torsors of $X_{sm}$ exist. Let $f:\mathscr{F}\to X_{sm}$ be a universal torsor of $X_{sm}$. Let $\widehat{M} :=\Div_{T_{\bar k}}((X_{\bar k})_{sm})$ and $M$ the dual torus of $\widehat{M}$. Let $i: M\to T$ be the morphism induced by $\text{div}: \widehat{T} \to \widehat{M}$. By \cite[Theorem 4.3.1]{Sko},
the restriction of the universal torsor $\mathscr{F}$ to $T$ has the form $\mathscr{F}_T  = M \times_T T$, which is the pull-back of $i: M\to T$ and $\tau: T\to T$, where $\tau$ is a translation of $T$ induced by a splitting of the following exact sequence
$$0\to \bar{k}^\times \to \bar{k}[T]^\times \to \bar{k}[T]^\times/\bar{k}^\times \to 0.$$
We have the following commutative diagram
$$
  \begin{CD}
    M \times_T T@>{p_1}>> M\\
    @V V {p_2}V @V V i V \\
    T @>{\tau}>> T,
  \end{CD}
$$
where $p_1$ and $p_2$ are the natural projections. Since $\tau$ is an isomorphism, we may replace $\mathscr{F}_T$ by $M$ with the structure morphism $\tau^{-1}\circ i: M\to T$.

Since $Z$ has codimension $\ge 2$, we have
$$\bar{k}[X_{sm}\setminus Z]^\times=\bar{k}[X_{sm}]^\times={\bar{k}}^\times,\ \Pic((X_{\bar k})_{sm}\setminus Z_{\bar{k}})=\Pic((X_{\bar k})_{sm}).$$

Let $S$ be a group of multiplicative type. By \cite[Corollary 2.3.9]{Sko}, we have the following commutative diagram
\begin{equation}
  \begin{CD}
    0 @>>> H^1(k,S)@>>> H^1(X_{sm},S) @>{\text{type}}>> \Hom(\Shat, \Pic((X_{\bar k})_{sm})@>>> H^2(k,S)\\
    @. @| @V V \Res V @VV{\cong}V @|@.\\
    0 @>>> H^1(k,S)@>>> H^1(X_{sm}\setminus Z,S) @>{\text{type}}>> \Hom(\Shat, \Pic((X_{\bar k})_{sm}\setminus  Z_{\bar{k}} ))@>>> H^2(k,S).
  \end{CD}
\end{equation}

Then any universal torsor $\mathscr{F}'$ of $X_{sm}\setminus Z$ is the restriction
of a universal torsor $f:\mathscr{F}\to X_{sm}$ to $X_{sm}\setminus Z$. Hence the restriction of $\mathscr{F}'$ to $T\setminus Z$ is $M\setminus (i^{-1}\circ \tau)(Z)$.

We shall call a 1-dimensional cone a ray. Let $\Sigma (1)$ be the set of rays of $\Sigma$ and set $d=\#\Sigma (1)$. We write
 $$ \Sigma (1)=\{\rho_i| 1\leqslant i\leqslant d\}.$$
Let $u_i \in N$ be the minimal generator of $\rho_i$ for $1\leqslant i\leqslant d$ (cf. \cite[p. 29, 30]{CJH}). Set
\begin{equation}\label{A.}
 e=-A\sum_{i=1}^{d}{u_i},
\end{equation}
where $A\in {\Bbb Z}_{>0}$. Obviously, $e$ is $\Gamma_k$-invariant. Since $X$ is complete, we have $N_{\Bbb R}=\bigcup_{\sigma \in \Sigma}\sigma$ by \cite[Theorem 3.4.1]{CJH}. Without loss of generality, we assume that $\sigma(u_j,u_{j+1},\cdots,u_{d-1},u_d)$ is a $\Gamma_k$-invariant cone containing $e$ (such cone exists, for example, one can take the smallest cone containing $e$), where $j>1$ and $u_j,u_{j+1},\cdots,u_{d-1},u_d$ are its  minimal generators (cf. \cite[p. 29, 30]{CJH}). Then we have
\begin{equation}\label{A..}
e=\sum_{t=j}^{d}c_t{u}_t,
\end{equation}
where $c_t \in {\Bbb R}_{\ge0}$ for $j \leqslant t\leqslant d$. Without loss of generality, we can choose $c_t \in {\Bbb Z}_{\ge0}$ for $j \leqslant t\leqslant d$. In fact, the equation $e=\sum_{t=j}^{d}X_t{u}_t$ defines a linear subvariety of $\Spec(\Bbb{Q}[X_j,\cdots, X_d])$, which is isomorphic to an affine space. By using weak approximation of affine space, one can take $c_t \in {\Bbb Q}_{\ge0}$ for $j \leqslant t\leqslant d$. Then by taking a suitable $A\in {\Bbb Z}_{>0}$, we can assume $c_t \in {\Bbb Z}_{\ge0}$ for $j \leqslant t\leqslant d$. In particular, if $T$ is a $k$-anisotropic torus, we have $e=0$, since $N^{\Gamma_k}=0$ (see (\ref{a})). In this case, we take $c_t=0$ for $j \leqslant t\leqslant d.$

 By (\ref{A.}) and (\ref{A..}), we have
\begin{equation}\label{A...}
A\sum_{i=1}^{d}{u}_i+\sum_{t=j}^{d}c_t{u}_t=0.
\end{equation}

Let $D_i$ be the $T_{\bar k}$-invariant weil divisor associated to $\rho_i \in \Sigma (1)$ (cf. \cite[Section 4.1]{CJH}).
Recall $\widehat{M}=\bigoplus_{i=1}^{d}{\Bbb Z}D_i=\bigoplus_{i=1}^{d}{\Bbb Z}u_i$. Define $\widetilde{N}:=X_*(M)=\Hom_{\Bbb Z}(\widehat{M}, \Bbb Z)$ which has a $\Gamma_k$-action such that the pairing
$$\langle \cdot , \cdot \rangle: \widetilde{N} \times \widehat{M} \to \Bbb Z$$
is $\Gamma_k$-invariant. Let $\widetilde{D}_i \in \widetilde{N}$ be the dual of $D_i$ for $ 1\leqslant i\leqslant d$ and set $\widetilde{N}_{\Bbb R}:=\widetilde{N}\otimes {\Bbb R}$. We write
\begin{equation}\label{D}
  \widetilde{D}_0=-(A\sum_{i=1}^{d}\widetilde{D}_i+\sum_{t=j}^{d}c_t\widetilde{D}_t) \in \widetilde{N}.
\end{equation}
Now we claim $\widetilde{D}_0$ is $\Gamma_k$-invariant.

(1) If $X$ is simplicial, the generators $u_j,u_{j+1},\cdots,u_{d-1},u_d$ are linearly independent over $\Bbb R$ by \cite[Theorem 3.1.19, Definition 1.2.16 and 3.1.18]{CJH}. Thus $\widetilde{D}_0$ is $\Gamma_k$-invariant since $e$ is $\Gamma_k$-invariant (see (\ref{A..})).

(2) In the case $T$ is a $k$-split torus, the action of $\Gamma_k$ on $\widetilde{N}$ is trivial. Thus $\widetilde{D}_0$ is $\Gamma_k$-invariant (see (\ref{D})).

(3) In the case $T$ is a $k$-anisotropic torus, we have taken $c_t=0$ for $j \leqslant t\leqslant d.$ Thus $\widetilde{D}_0$ is $\Gamma_k$-invariant.

Now let $\Delta$ be the fan in $\widetilde{N}_{\Bbb R}$ made up of the cones generated by all the proper subsets of $\{\widetilde{D}_0, \widetilde{D}_1, \cdots, \widetilde{D}_d\}$ (cf. \cite[Example 3.1.17]{CJH}). The fan $\Delta$ is $\Gamma_k$-invariant by our constructions. Denote by $Y$ the $M$-toric variety associated to $\Delta$. Note that $Y$ is defined over $k$ by the effectiveness of the descent (see \cite[Example B in pp.139-141]{Neron} or \cite[Chapter V, \S 4, Corollary 2]{Serre}). By Corollary \ref{k-form}, the variety $Y_{sm}$ satisfies the arithmetic purity.

By \cite[Lemma 3.1]{DW}, to prove this theorem, it suffices to prove the following claim:

There is a closed subscheme $W\subset Y_{sm}$ of codimension $\ge 2$, satisfying $Y_{sm}\setminus W \supset M\setminus(i^{-1}\circ \tau)(Z)$, and such that the restriction of $\tau^{-1}\circ i$ to $M\setminus (i^{-1}\circ \tau)(Z)$ can be extended to $Y_{sm}\setminus W \to X_{sm}\setminus Z$.

Note that the translation $\tau: T \to T$ can be extended to a unique isomorphism of $X_{sm}$ which we also denote by $\tau$.
In fact, we only need to show that there exists a closed subscheme $W\subset Y_{sm}$ of codimension $\ge 2$ such that $i: M\setminus i^{-1}(\tau (Z)) \to T\setminus \tau(Z)$ can be extended to $Y_{sm}\setminus W \to X_{sm}\setminus \tau(Z)$. Then the composite map $Y_{sm}\setminus W \to X_{sm}\setminus \tau(Z)\xrightarrow{{\tau}^{-1}} X_{sm}\setminus Z$ gives the desired map.

Now we show that such $W$ exists. Define the $\Gamma_k$-invariant subfan of $\Delta$
$$\Delta':=\{0\}\cup \Delta (1)\subset \Delta,$$
where $\Delta (1)$ is the set of rays of $\Delta$. Denote by $\Delta'(1)$ the set of rays of $\Delta'$ and by $Y_{\Delta'}$ the $M$-toric variety associated to $\Delta'$. Similarly, set
$$\Sigma':=\{0\}\cup \Sigma (1)\subset \Sigma.$$
Denote by $X_{\Sigma'}$ the toric variety associated to $\Sigma'$. Note that the toric varieties $X_{\Sigma'}$ and $Y_{\Delta'}$ are defined over $k$ by Galois descent. By \cite[Theorem 3.1.19]{CJH}, we have $X_{\Sigma'}\subset X_{sm}$ and $Y_{\Delta'}\subset Y_{sm}$. By \cite[Theorem 3.2.6 and Exercise 4.1.1]{CJH}, we have
\begin{center}
$\codim(X_{sm}\setminus X_{\Sigma'}, X_{sm})\ge 2$ and $\codim(Y_{sm}\setminus Y_{\Delta'}, Y_{sm})\ge 2.$
\end{center}

We will prove the claim as follows. We construct a morphism $g: Y_{\Delta'}\to X_{sm}$ which extends $i: M\to T$, such that codim$(g^{-1}
(\tau(Z)), Y_{\Delta'})\ge 2$. Then we can take $W=\overline{(Y_{sm}\setminus Y_{\Delta'})\bigcup g^{-1}(\tau(Z))}\subset Y_{sm}.$

Let $\tilde{g}:\widetilde{N}_{\Bbb R}\to {N}_{\Bbb R}$ be the morphism induced by $\text{div}: \widehat{T} \to \widehat{M}$. By \cite[Proposition 4.1.2]{CJH}, we have
$$\text{div}: \widehat{T} \to \widehat{M}, \ \ \chi \mapsto \text{div}(\chi)=\sum_{i=1}^{d}\langle u_i, \chi \rangle D_i.$$
For $1\leqslant i\leqslant d$, we have $\tilde{g}(\widetilde{D}_i)\in N=\Hom(\widehat{T},\Bbb Z)$ and for any $\chi \in \widehat{T}$
$$\tilde{g}(\widetilde{D}_i)(\chi)=\widetilde{D}_i(\sum_{i=1}^{d}\langle u_i, \chi \rangle D_i)=\langle u_i, \chi  \rangle .$$
Then $\tilde{g}(\widetilde{D}_i)=u_i$ for $1\leqslant i\leqslant d$ and $\tilde{g}(\widetilde{D}_0)=-(A\sum_{i=1}^{d}{u}_i+\sum_{t=j}^{d}c_t{u}_t)=0$ by (\ref{A...}). Hence $\tilde{g}$ induces a morphism of fans
$\Delta' \to \Sigma'$ compatible with the action of $\Gamma_k$. This gives a toric morphism $g:Y_{\Delta'}\to X_{\Sigma'} \to X_{sm}$ over $k$ which extends $i:M \to T$.

Now we prove that codim$(g^{-1}(\tau(Z)), Y_{\Delta'})\ge 2$. Note that $i:M \to T$ is surjective, hence it's flat. Then we have codim$(i^{-1} (\tau(Z)\cap T), M)\ge 2$. On the other hand, by \cite[Theorem 3.2.6 and Lemma 3.3.21]{CJH}, one can see that the divisor of $Y_{\Delta'}$ associated to Cone$(\widetilde{D}_i) \in  \Delta' (1)$ dominates the divisor $D_i$ of $X_{sm}$ for $1\leqslant i\leqslant d$ and the divisor of $Y_{\Delta'}$ associated to Cone($\widetilde{D}_0) \in \Delta' (1)$ dominates $X_{sm}.$ Thus we have codim$(g^{-1}(\tau(Z)), Y_{\Delta'})\ge 2$.
\qed

\begin{corollary}\label{cor.2.4.1}
For a general weighted projective space $\mathbb{P}_k(q_0, q_1,...,q_n)$, its smooth locus satisfies the arithmetic purity.
\end{corollary}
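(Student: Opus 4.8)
The plan is to deduce this from Theorem~\ref{t.2.3} together with the vanishing of the algebraic Brauer group. A weighted projective space $\mathbb{P}_k(q_0,\dots,q_n)$ is a complete normal toric variety for its dense torus $\Gm^n$, which is $k$-split; so case~(2) of Theorem~\ref{t.2.3} directly yields, for every closed subset $Z\subset\mathbb{P}_k(q_0,\dots,q_n)_{sm}$ of codimension $\geq2$, that $U:=\mathbb{P}_k(q_0,\dots,q_n)_{sm}\setminus Z$ satisfies strong approximation with respect to $\Br_1(\mathbb{P}_k(q_0,\dots,q_n)_{sm})$. It then remains only to argue that $\Br_1$ imposes no condition, so that this is plain strong approximation of $U$.

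For that I would first invoke Remark~\ref{rem 2.0.1.1} to assume $\mathbb{P}_k(q_0,\dots,q_n)$ is normalized, i.e. $r=\min_{h>1}\#\{X_i\mid h\nmid q_i\}>1$ (harmless, since arithmetic purity is invariant under $k$-isomorphism). As in the proof of Proposition~\ref{prop.2.2.0}, $\mathbb{P}_{\bar k}^*(q_0,\dots,q_n)$ is an open subset of $\mathbb{P}_{\bar k}(q_0,\dots,q_n)_{sm}$ whose complement has codimension $\geq2$ (using normality and Remark~\ref{rem.2.0}); removing this, together with $Z_{\bar k}$, from the smooth variety $\mathbb{P}_{\bar k}(q_0,\dots,q_n)_{sm}$ changes neither the units nor the Picard group, so Lemma~\ref{lem.2.0.1} (here $r>1$ is needed) gives $\bar k[U_{\bar k}]^\times=\bar k^\times$ and $\Pic(U_{\bar k})\cong\Pic(\mathbb{P}_{\bar k}^*(q_0,\dots,q_n))\cong\mathbb{Z}$, with trivial $\Gamma_k$-action since $\Gm^n$ is split. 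Then the Hochschild--Serre exact sequence gives $\Br_a(U)\hookrightarrow H^1(k,\Pic(U_{\bar k}))=H^1(k,\mathbb{Z})=0$, and the same for $\mathbb{P}_k(q_0,\dots,q_n)_{sm}$; hence $\Br_1(\mathbb{P}_k(q_0,\dots,q_n)_{sm})$ is contained in the image of $\Br(k)$.

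Since classes from $\Br(k)$ never obstruct, $U(\mathbf A_k)^{\Br_1(\mathbb{P}_k(q_0,\dots,q_n)_{sm})}=U(\mathbf A_k)$, so the content of Theorem~\ref{t.2.3} for $U$ is exactly that $U(k)$ is dense in $U(\mathbf A_k)$. As $Z$ ranges over all codimension $\geq2$ closed subsets of the smooth locus, this shows $\mathbb{P}_k(q_0,\dots,q_n)_{sm}$ satisfies the arithmetic purity. I do not expect any genuine difficulty in this last part: all the real work sits in Theorem~\ref{t.2.3} (hence ultimately in Proposition~\ref{prop.2.1}), and what remains is the bookkeeping with codimension~$\geq2$ loci plus the elementary observation that a split torus together with $\Pic\cong\mathbb{Z}$ forces the algebraic Brauer group to vanish.
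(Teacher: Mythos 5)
Your proposal is correct and follows essentially the same route as the paper: reduce to a normalized weighted projective space via Remark~\ref{rem 2.0.1.1}, use Lemma~\ref{lem.2.0.1} to get $\Pic(\mathbb{P}_{\bar k}(q_0,\dots,q_n)_{sm})\cong\mathbb{Z}$ with trivial Galois action, deduce $\Br_a(\mathbb{P}_k(q_0,\dots,q_n)_{sm})\cong H^1(k,\mathbb{Z})=0$ from \cite[Corollary 2.3.9]{Sko}, and then apply Theorem~\ref{t.2.3} (split-torus case), noting that constant classes impose no condition. The extra bookkeeping you include (codimension of the complement of $\mathbb{P}^*_{\bar k}$, the computation of $\Br_a(U)$) is harmless but not needed beyond what the paper records.
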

\begin{proof}
By Lemma \ref{lem.2.0.1} and Remark \ref{rem 2.0.1.1}, one has $\Pic(\mathbb{P}_{\bar k}(q_0, q_1,...,q_n)_{sm})\simeq \Bbb Z.$ By \cite[Corollary 2.3.9]{Sko}, we have
 $$\Br_a(\mathbb{P}_k(q_0, q_1,...,q_n)_{sm})\cong H^1(k, \Bbb Z)=0.$$
Now it follows from Theorem \ref{t.2.3}.
\end{proof}
\begin{remark}
By Proposition \ref{prop.2.2.0}, Corollary \ref{cor.2.4.1} is also true for $k$-forms of weighted projective spaces with smooth $k$-points.
\end{remark}
\begin{corollary}
Let $V \subset \mathbb{P}_k^3$ be the cubic surface over $k$ defined by equation
$$x_0x_1x_2=t^{3}.$$
Then $V_{sm}$ satisfies the arithmetic purity with respect to $\Br_1(X_{sm})$.
\end{corollary}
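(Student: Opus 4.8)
The plan is to recognise $V$ as a complete normal simplicial toric surface over $k$ and then quote Theorem \ref{t.2.3}.

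First I would write down the toric structure explicitly. The defining equation $x_0x_1x_2=t^3$ is binomial, and the split torus $T=\Gm^2$ acts on $\mathbb{P}_k^3$ by $(s,u)\cdot[x_0:x_1:x_2:t]=[sx_0:ux_1:(su)^{-1}x_2:t]$; this action preserves $V$ because $(sx_0)(ux_1)((su)^{-1}x_2)=x_0x_1x_2$. After dehomogenising $t=1$, the open set $V\cap\{t\neq 0\}$ is the affine surface $\{x_0x_1x_2=1\}\cong\Gm^2$, which is a single dense $T$-orbit. Hence $V$ is a $T$-toric variety over $k$ with $T$ split over $\QQ$. I would also note that $V$ is geometrically integral, since the cubic $x_0x_1x_2-t^3$ is irreducible over $\bar k$ (it has no linear factor, as one checks by inspecting degrees).

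Next I would check normality (completeness being clear, as $V$ is projective). Being a hypersurface in $\mathbb{P}_k^3$, $V$ is Cohen--Macaulay, so it satisfies Serre's condition $S_2$; a direct computation of the partial derivatives shows that the singular locus of $V$ consists of the three points $[1:0:0:0]$, $[0:1:0:0]$, $[0:0:1:0]$, which has codimension $2$, so $V$ is regular in codimension $1$. By Serre's criterion $V$ is normal. (Equivalently, $V$ is the image of the finite morphism $\mathbb{P}_k^2\to\mathbb{P}_k^3$, $[a:b:c]\mapsto[a^3:b^3:c^3:abc]$, which realises $V_{\bar k}$ as the quotient of $\mathbb{P}_{\bar k}^2$ by the finite group $\mu_3\times\mu_3\subset\Gm^2$ and hence shows $V$ is normal.)

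Finally I would invoke Theorem \ref{t.2.3}. Since $V$ is a complete normal toric surface, every maximal cone of its ($\Gamma_k$-invariant) fan is $2$-dimensional, hence generated by exactly two rays, so the fan is simplicial and $V_{\bar k}$ is simplicial; thus Theorem \ref{t.2.3}(1) applies (one may equally use part (2), as $T$ is $k$-split). It yields that $V_{sm}\setminus Z$ satisfies strong approximation with respect to $\Br_1(V_{sm})$ for every closed subset $Z\subset V_{sm}$ of codimension $\ge 2$; since $Z$ is arbitrary, this is exactly the assertion that $V_{sm}$ satisfies the arithmetic purity with respect to $\Br_1(V_{sm})$. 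The only step that is not pure bookkeeping is the normality check; once that is settled the corollary is immediate from Theorem \ref{t.2.3}.
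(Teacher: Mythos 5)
Your argument is correct and follows essentially the same route as the paper: recognise $V$ as a complete normal toric surface (the toric del Pezzo cubic with three $\mathbf{A}_2$ singularities), note that any complete toric surface is simplicial, and apply Theorem \ref{t.2.3}; the paper simply cites these facts as well known, while you verify them (torus action, Serre's criterion for normality). One small slip in your parenthetical alternative: the cover $\mathbb{P}^2_{\bar k}\to V_{\bar k}$, $[a:b:c]\mapsto[a^3:b^3:c^3:abc]$, has degree $3$ (the base-point-free cubic system has self-intersection $9$ and the image has degree $3$), so it exhibits $V_{\bar k}$ as a quotient by a cyclic group $\mu_3$ embedded in the torus, not by $\mu_3\times\mu_3$ --- but this does not affect your main normality argument via Serre's criterion.
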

\begin{proof}
It's well known that $X$ is a toric singular del Pezzo surface with singularity of type $3{\mathbf A}_2$. One can apply Theorem \ref{t.2.3}
to this surface.
\end{proof}
\begin{remark}

(1) Note that the torus $\Res^{(1)}_{K/k}\Gm$ is $k$-anisotropic, where $K=k(\omega)$ is a cubic extension over $k$. Then one can also apply Theorem \ref{t.2.3} to the cubic surface $V' \subset \mathbb{P}_k^3$ defined by equation
$$N_{K/k}(x_0+x_1\omega+x_2{\omega}^2)=t^{3}.$$

(2) More examples of toric singular del Pezzo surfaces can be found in \cite{D}.
\end{remark}

\begin{corollary}
Let $G$ be a reductive group over $k$, and $P$ be a parabolic subgroup of $G$ over $k$. Let $X'$ be a complete, smooth $T$-toric variety over $k$. Let $\lambda : P \to T$ be a homomorphism which defines an action of $P$ on $X'$. Set $Q:= G\times^{P} X'$. Then $Q$ satisfies the arithmetic purity with respect to $\Br_1(Q)$.
\end{corollary}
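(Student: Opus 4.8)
The plan is to deduce the claim from the arithmetic purity of $G/P$ (Corollary \ref{G/P}) and the arithmetic purity with respect to $\Br_1$ of complete smooth toric varieties (the case of Theorem \ref{t.2.3} applied to $\mathbb{P}_k^{n}\times X'$), using the fibration $Q\to G/P$, the $G$-action on $Q$, and the transfer lemmas of \S2 together with the descent method.

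\textbf{Geometry.} Since $P$ is a parabolic $k$-subgroup of $G$, the quotient $G\to G/P$ is Zariski-locally trivial, so $\pi\colon Q=G\times^{P}X'\to G/P$ is a Zariski-locally trivial fibration with fibre $X'$; hence $Q$ is smooth, complete, geometrically integral, $\bar k[Q]^{\times}=\bar k^{\times}$, and $\Pic(Q_{\bar k})$ is finitely generated and torsion-free. The group $G$ acts on $Q$ by left translation on the first factor, by $k$-automorphisms, compatibly via $\pi$ with the transitive $G$-action on $G/P$; the map $G(k)\to (G/P)(k)$ is surjective by \cite[Proposition 20.5]{Bor}, and $G(k)$ is Zariski-dense in $G$. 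By \cite[Proposition 20.5]{Bor}, $G/P$ has a dense open $U\cong {\Bbb A}_k^{n}$, $n=\dim(G/P)$; I set $V:=\pi^{-1}(U)$, fix a trivialisation $V\cong {\Bbb A}_k^{n}\times X'$, and note that $V$ is dense open in $Q$.

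\textbf{Reduction and transfer.} The variety $\mathbb{P}_k^{n}\times X'$ is a complete smooth --- hence simplicial --- toric variety, so by Theorem \ref{t.2.3} any of its open subsets with complement of codimension $\ge 2$ satisfies strong approximation with respect to $\Br_1(\mathbb{P}_k^{n}\times X')$. The open immersion ${\Bbb A}_k^{n}\times X'\cong V\hookrightarrow Q$ composed with ${\Bbb A}_k^{n}\times X'\hookrightarrow \mathbb{P}_k^{n}\times X'$ yields a rational map $j\colon\mathbb{P}_k^{n}\times X'\dashrightarrow Q$; as $Q$ is proper and $\mathbb{P}_k^{n}\times X'$ is smooth, $j$ is a morphism away from a closed set $Z$ with $\codim(Z,\mathbb{P}_k^{n}\times X')\ge 2$, and $Z\subset C_0:=(\mathbb{P}_k^{n}\setminus {\Bbb A}_k^{n})\times X'$. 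I put $D:=\overline{j(C_0\setminus Z)}\subset Q$, which is irreducible. Now fix $W\subset Q$ closed of codimension $\ge 2$; the goal is that $Q\setminus W$ satisfies strong approximation with respect to $\Br_1(Q)$. Granting that one can choose $g\in G(k)$ with $D\not\subset gW$, after replacing $W$ by $gW$ (a $k$-automorphism of $Q$) one may assume $D\not\subset W$; then $j$ restricts to a morphism $j'\colon(\mathbb{P}_k^{n}\times X')\setminus(Z\cup j^{-1}(W))\to Q\setminus W$ whose source is an open subset of $\mathbb{P}_k^{n}\times X'$ with complement of codimension $\ge 2$ --- on ${\Bbb A}_k^{n}\times X'$ the map $j$ is an isomorphism, and $j^{-1}(W)\cap C_0$ is a proper closed subset of the irreducible $C_0$ exactly because $D\not\subset W$. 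Since $Q$ and $\mathbb{P}_k^{n}\times X'$ are smooth, proper and $\bar k$-rational, $j'^{*}$ identifies $\Br_1(Q)=\Br_1(Q\setminus W)$ with $\Br_1(\mathbb{P}_k^{n}\times X')$ (purity and birational invariance of the Brauer group), and $j'$ has a rational section because $j$ is an isomorphism on ${\Bbb A}_k^{n}\times X'$; so by Lemma \ref{lem.2.0.2}, applied with Brauer--Manin sets, strong approximation with respect to $\Br_1(\mathbb{P}_k^{n}\times X')$ for the source passes to strong approximation with respect to $\Br_1(Q)$ for $Q\setminus W$.

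\textbf{The hard part.} The whole argument rests on producing $g\in G(k)$ with $D\not\subset gW$. When $\dim X'\le 1$ this is free: $\pi$ carries $\overline{G\cdot D}$ onto $G/P$, so $\dim\overline{G\cdot D}\ge n>n+\dim X'-2\ge\dim W$, whence $\overline{G\cdot D}\not\subset W$ and, $G(k)$ being dense, some translate of $D$ avoids $W$. For $\dim X'\ge 2$ this is the genuine obstacle: the trivialisation of $Q$ degenerates along the boundary of the big cell, so $j$ may contract $C_0$ and $D$ may be tiny --- possibly a single torus-fixed point of a fibre, whose $G$-orbit is a section of $\pi$ of codimension $\dim X'\ge 2$ that a malicious $W$ could contain, defeating all translations. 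One way forward is a combinatorial analysis of the degeneration of $j$ along $C_0$ in terms of the fan of $X'$ (as in the computations of \S3) controlling $\overline{G\cdot D}$; another is to replace $\mathbb{P}_k^{n}\times X'$ by an equivariant modification. The cleanest remedy, and the one I would ultimately pursue, bypasses the auxiliary product: exploit that $Q$ is a toric bundle over $G/P$, take a universal torsor of $Q\setminus W$, compactify it fibrewise by the weighted-projective-space $k$-forms produced in \S3, and conclude from the arithmetic purity of $G/P$ and of those forms (Corollaries \ref{G/P} and \ref{k-form}) via the descent statement \cite[Lemma 3.1]{DW}, exactly as in the proof of Theorem \ref{t.2.3}.
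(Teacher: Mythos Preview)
Your primary approach has the gap you yourself diagnose: once $\dim X'\ge 2$, nothing prevents $j$ from contracting the boundary divisor $C_0$ to a set $D$ whose $G(k)$-orbit closure is still of codimension $\ge 2$ in $Q$, so the translation trick from Corollary~\ref{G/P} need not produce $g\in G(k)$ with $D\not\subset gW$. The remedies you sketch (fan-combinatorics of the degeneration, equivariant modification, fibrewise descent) are not carried out, and the last one, while plausible in spirit, is not how the paper proceeds either.

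The paper's argument bypasses the auxiliary compactification and the translation step altogether. It works directly with the fibration $p_0\colon Q\to G/P$, which is Zariski-locally trivial with fibre $X'$. Given $Z\subset Q$ closed of codimension $\ge 2$, one observes (by generic flatness, cf.\ \cite[II, Exercise~3.22(e)]{Hartshorne}) that there is a nonempty open $U\subset G/P$ over which every fibre of $Q\setminus Z\to G/P$ is an open subset of $X'$ with complement of codimension $\ge 2$; by Theorem~\ref{t.2.3} each such fibre satisfies strong approximation with respect to $\Br_1$. One then feeds this into Harari's fibration method \cite[Theorem~4.3.1]{Hara}: the base $G/P$ is proper, smooth, $k$-rational (in particular satisfies weak approximation and has $\Br_1(G/P)=\Br(k)$), the map has local sections, and the fibres over $U(k)$ have the required arithmetic property. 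The conclusion for $Q\setminus Z$ follows from the proof of that theorem. This sidesteps entirely the question of how the trivialisation degenerates along the boundary of the big cell.
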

\begin{proof}
Let $Z\subset Q$ be a closed subset of codimension $\ge 2$. We need to prove that $Q\setminus Z$ satisfies strong approximation with respect to $\Br_1(Q)$. We will denote by $\overline{(g, x)}$ the image of the pair $(g, x)$ under the quotient map $G\times X' \to G\times^{P} X'$. Let
$$p_0: Q \to G/P ; \ \ \overline{(g, x)} \mapsto gP.$$
Note that the map $p_0$ has local sections (cf. \cite[\S5.5.7, \S 5.5.8 and Theorem 15.1.3]{Spr}), and its fibers lying over rational points of $G/P$ are isomorphic to $X'$.
Consider the map
$$p_1: Q\setminus Z \to  G/P$$
induced by $p_0$. We claim that there exists an open subset $U\subset G/P$ such that for any $\theta \in U(k)$, the fiber $p_{1}^{-1}(\theta)$ is an open subset of $X'$ with complement of codimension $\ge 2$. If $\overline{p_0(Z)}$ is a proper subset of $G/P$, then we can take $U=(G/P)\setminus \overline{p_0(Z)}$. If $\overline{p_0(Z)}=G/P$, we consider the map $p_2: Z \to G/P$. Then there is an open subset $V\subset G/P$ such that for any $\theta \in V(k)$, we have $\codim (p_2^{-1}(\theta), p_0^{-1}(\theta))\ge 2$ by \cite[Chapter II, Exercise 3.22(e)]{Hartshorne}. Then we can take $U=V$. Now one can check that the map $p_1$ satisfies all conditions of \cite[Theorem 4.3.1]{Hara}. By the proof of \cite[Theorem 4.3.1]{Hara}, we obtain that $Q\setminus Z$ satisfies strong approximation with respect to $\Br_1(Q)$.

\end{proof}

\subsection{Proof of Theorem \ref{t.2.5}}
By \cite[Lemma 3.2]{PR} and valuative criterion of properness, one concludes that $X(k)$ is dense in $X({\mathbf A}^S_k)$. Now it follows from Theorem \ref{t.2.3} and Lemma \ref{lem.2.6}.
\qed

\begin{lemma}\label{lem.2.6}
Let $k$ be a number field and $S$ be a finite set of places of $k$. Let $X$ be a smooth and geometrically integral variety such that $\Br_a(X)$ is finite. Let $U$ be an open set of $X$. Suppose that\\

(1) $X(k)$ is dense in $X({\mathbf A}_k)^{\Br_a(X)}$,

(2) $X(k)$ is dense in $X({\mathbf A}^S_k)$,

(3) $U(k)$ is dense in $U({\mathbf A}_k)^{\Br_a(X)}$.\\

Then $U(k)$ is dense in $U({\mathbf A}^S_k)$.
\end{lemma}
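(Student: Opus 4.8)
The plan is to reduce the statement to a single concrete approximation problem and then, starting from a point of $U(\mathbf{A}_k^S)$, to manufacture an adelic point of $U$ that lies in $U(\mathbf{A}_k)^{\Br_a(X)}$ and has the same components outside $S$; once this is done, hypothesis (3) produces the required rational point and hypotheses (1)--(2) are used only to build the auxiliary point.

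First I would unwind the definitions: it suffices to fix $(P_v)_{v\notin S}\in U(\mathbf{A}_k^S)$, a finite set $S_1\subseteq\Omega_k\setminus S$, an integral model $\mathcal{U}$ of $U$ with $P_v\in\mathcal{U}(\mathcal{O}_v)$ for $v\notin S\cup S_1$, and open neighbourhoods $W_v\ni P_v$ for $v\in S_1$, and to exhibit $x\in U(k)$ with $x\in W_v$ for $v\in S_1$ and $x\in\mathcal{U}(\mathcal{O}_v)$ for $v\notin S\cup S_1$. Since $\Br_a(X)$ is finite, I would choose finitely many $b_1,\dots,b_r\in\Br_1(X)$ whose classes generate $\Br_a(X)$, a finite set $T\supseteq S\cup S_1\cup\infty_k$ and an integral model $\mathcal{X}\supseteq\mathcal{U}$ over $\mathcal{O}_T$ such that every $b_i$ extends to $\Br(\mathcal{X})$; then each $b_i$ vanishes on $\mathcal{X}(\mathcal{O}_v)$ for $v\notin T$, so the Brauer--Manin pairing against the $b_i$ is concentrated on the places of $T$. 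I would also fix one finite place $v_0\notin T$.

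The heart of the argument uses hypothesis (2). Because $X(k)$ is dense in $X(\mathbf{A}_k^S)$, I can pick $y\in X(k)$ that is $v$-adically so close to $P_v$ for each $v\in T\setminus S$ that $b_i(y)=b_i(P_v)$ in $\Br(k_v)$ for all $i$ (legitimate since $\mathrm{ev}_{b_i}\colon X(k_v)\to\Br(k_v)$ is locally constant) and $y\in W_v$ for $v\in S_1$, that is $v_0$-adically so close to $P_{v_0}$ that $y\in\mathcal{U}(\mathcal{O}_{v_0})$, and that lies in $\mathcal{X}(\mathcal{O}_v)$ for all other $v\notin T$; indeed these requirements cut out a nonempty open subset of $X(\mathbf{A}_k^S)$ containing $(P_v)_{v\notin S}$. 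Since $y\in\mathcal{U}(\mathcal{O}_{v_0})\subseteq U(k_{v_0})$ and membership in the open subscheme $U$ can be checked over $k_{v_0}$, we get $y\in U(k)$. Now set $Q_v:=y$ for $v\in S$ and $Q_v:=P_v$ for $v\notin S$: this is an adelic point of $U$, and for every $i$ one has $b_i(Q_v)=b_i(y)$ in $\Br(k_v)$ for all $v\notin S$ (by the choice of $y$ at $v\in T\setminus S$, and because both $y$ and $P_v$ lie in $\mathcal{X}(\mathcal{O}_v)$, hence kill $b_i$, for $v\notin T$). Consequently $\sum_v\mathrm{inv}_v(b_i(Q_v))=\sum_v\mathrm{inv}_v(b_i(y))=0$ by the reciprocity law of global class field theory, since $y$ is a rational point. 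Hence $(Q_v)\in U(\mathbf{A}_k)^{\Br_a(X)}$.

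To conclude, hypothesis (3) says $U(k)$ is dense in $U(\mathbf{A}_k)^{\Br_a(X)}$, so it meets the open neighbourhood $\prod_{v\in S_1}W_v\times\prod_{v\in S}U(k_v)\times\prod_{v\notin S\cup S_1}\mathcal{U}(\mathcal{O}_v)$ of $(Q_v)$; any $x$ in this intersection satisfies $x\in W_v$ for $v\in S_1$ and $x\in\mathcal{U}(\mathcal{O}_v)$ for $v\notin S\cup S_1$, as required. The conceptual point is simply that the reciprocity law lets one transfer the Brauer--Manin defect of $(P_v)_{v\notin S}$ onto the freely chosen components at the places of $S$, using a single auxiliary rational point of $X$; I expect the main nuisance to be purely organizational --- pinning down $T$, the model $\mathcal{X}$ and the classes $b_i$ so that the pairing is supported on $T$, and arranging the auxiliary point $y$ to be simultaneously close to $(P_v)$ on $T\setminus S$, integral off $T$, and inside $U$.
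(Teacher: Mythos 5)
Your argument is correct, and it is genuinely a different implementation of the same overall strategy (complete $(P_v)_{v\notin S}$ to a point of $U(\mathbf{A}_k)^{\Br_a(X)}$ with unchanged components off $S$, then invoke (3)). The paper carries out the completion softly: finiteness of $\Br_a(X)$ makes $pr^S\bigl(X(\mathbf{A}_k)^{\Br_a(X)}\bigr)$ and $pr^S\bigl(U(\mathbf{A}_k)^{\Br_a(X)}\bigr)$ closed, hypotheses (1) and (2) then force $pr^S\bigl(X(\mathbf{A}_k)^{\Br_a(X)}\bigr)=X(\mathbf{A}_k^S)$, and the resulting $S$-components, which a priori only lie in $\prod_{v\in S}X(k_v)$, are pushed into $\prod_{v\in S}U(k_v)$ by a local perturbation using \cite[Lemma 3.2]{PR} together with local constancy of the evaluation maps \cite[Proposition 8.2.9]{Poonen}. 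You instead build the $S$-components explicitly: spread out finitely many representatives $b_i\in\Br_1(X)$ of generators of $\Br_a(X)$ over a model so that the pairing is supported on a finite set $T$, use (2) to find one auxiliary rational point $y$ close to $(P_v)$ at $T\setminus S$ (and at an extra place $v_0$ to force $y\in U(k)$) and integral elsewhere, and let reciprocity for $y$ kill the obstruction; no perturbation inside the fibres over $S$ is needed since $y$ itself lies in $U(k)$. The trade-off: the paper's proof is shorter and stays at the level of adelic topology, while yours is more hands-on but buys something real --- it never uses hypothesis (1), and it only needs $\Br_a(X)$ to be finitely generated rather than finite, so it proves a slightly stronger statement; the local-constancy input is the same in both proofs, and the remaining issues in your write-up (lifting generators to $\Br_1(X)$, compatibility of the models $\mathcal{U}\subset\mathcal{X}$ after enlarging $T$, treating archimedean places by putting $\infty_k\setminus S$ into $S_1$) are routine bookkeeping.
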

\begin{proof}
We use the same notation $pr^S$ for the projections $X({\mathbf A}_k) \to X({\mathbf A}^S_k)$ and $U({\mathbf A}_k) \to U({\mathbf A}^S_k)$. Since $\Br_a(X)$ is finite, we obtain that $pr^S(X({\mathbf A}_k)^{\Br_a(X)})$ is closed in $X({\mathbf A}^S_k)$ and $pr^S(U({\mathbf A}_k)^{\Br_a(X)})$ is closed in $U({\mathbf A}^S_k)$. By (3), we obtain that $U(k)$ is dense in $pr^S(U({\mathbf A}_k)^{\Br_a(X)})$, hence we only need to prove $pr^S(U({\mathbf A}_k)^{\Br_a(X)})=U({\mathbf A}^S_k)$. By (1) and (2), one has $pr^S(X({\mathbf A}_k)^{\Br_a(X)})=X({\mathbf A}^S_k)$. Therefore, for any $(P_v)_{v\in {\Omega_k\setminus S}} \in U({\mathbf A}^S_k) \subset X({\mathbf A}^S_k)$, there exists $(P_v)_{v \in S} \in \prod_{v\in S}X(k_v)$ such that
$$ \sum_{v \in S}\zeta(P_v) + \sum_{v \in {\Omega_k \setminus S}}\zeta(P_v)=0$$
for any $\zeta \in \Br_a(X)$. Since $\Br_a(X)$ is finite, one can take $(P_v)_{v \in S} \in \prod_{v \in S}U(k_v)$ by \cite[Lemma 3.2]{PR} and \cite[Proposition 8.2.9]{Poonen}. Thus $pr^S(U({\mathbf A}_k)^{\Br_a(X)})=U({\mathbf A}^S_k)$.
\end{proof}

\bf{Acknowledgments} 	
 \it{ The author would like to thank Yang Cao for many fruitful discussions, and to Fei Xu and Yongqi Liang for useful suggestions. The author is partially supported by National Natural Science Foundation of China No.12071448 and Anhui Initiative in Quantum Information Technologies No. AHY150200.}


\end{document}